\newcommand{\R}{\mathbb{R}}
\newcommand{\M}{\mathbb{M}}
\newcommand{\Sphere}{\mathbb{S}}
\newcommand{\delr}{\dfrac{r}{\delta}}
\newcommand{\lp}{\left(}
\newcommand{\rp}{\right)}
\newcommand{\calA}{\mathcal{A}}
\newtheorem{theorem}{Theorem}
\newtheorem{lemma}[theorem]{Lemma}
\newtheorem{proposition}[theorem]{Proposition}
\newtheorem{corollary}[theorem]{Corollary}
\title{Scattered Data Interpolation on Embedded Submanifolds with Restricted Positive Definite Kernels: Sobolev Error Estimates}
\author{Edward Fuselier\\
Department of Mathematics and Computer Science\\
High Point University\\
High Point, NC 27262\\
\\
Grady B. Wright
\thanks{Research supported by grant
ATM-0801309, DMS-0934581, and DMS-0540779 from the National Science Foundation.}
\\
Department of Mathematics \\
Boise State University \\
Boise, ID 83725-1555\\
}
\begin{document}
\maketitle

\begin{abstract}
In this paper we investigate the approximation properties of kernel interpolants on manifolds. The kernels we consider will be obtained by the restriction of positive definite kernels on $\R^d$, such as radial basis functions (RBFs), to a smooth, compact embedded submanifold $\M\subset \R^d$. For restricted kernels having finite smoothness, we provide a complete characterization of the native space on $\M$. After this and some preliminary setup, we present Sobolev-type error estimates for the interpolation problem.  Numerical results verifying the theory are also presented for a one-dimensional curve embedded in $\R^3$ and a two-dimensional torus. 
\end{abstract}

\let\thefootnote\relax\footnotetext{\emph{2010 MSC}: Primary 41A05, 41A25, 41A30, 41A63; Secondary 65D05, 46E22.}
\let\thefootnote\relax\footnotetext{\emph{Key words and phrases}: kernel, radial basis functions, scattered data, interpolation, manifold.}

\section{Introduction}

Kernels have proven to be quite useful in the approximation of multivariate functions given scattered data, and perhaps the most rudimentary problem of this type is that of interpolation. Let $\Omega$ be a metric space, and let $\phi:\Omega\times\Omega\rightarrow \R$ be a function (which we will refer to as a \emph{kernel}). Given a target function $f:\Omega\rightarrow \R$ and a finite set of distinct nodes $X = \{x_1,x_2,\ldots,x_N \}\subset\Omega$, one can seek an interpolant $I_{X,\phi}f$ via shifts of $\phi$, i.e. $I_{X,\phi}f$ takes the form
\[I_{X,\phi}f=\sum_jc_j\phi(\cdot\,,x_j),\]
and satisfies $I_{X,\phi}f|_{X} = f|_{X}$.  Finding the appropriate coefficients $c_j$ is a matter of inverting the Gram matrix with entries $A_{i,j}=\phi(x_i,x_j)$, which is always theoretically possible if $\phi$ is positive definite. The kernel $\phi$ can also be used to construct more general interpolants, such as those where the data is generated from various types of linear functionals (integral data, derivative data, etc.) \cite{NarcWard1994:hermiteinterpolation,Wu1992:hermiteRBF}. However, in this paper we will focus on the traditional interpolation problem.

Although kernel approximants were initially considered with the domain being a Euclidean space or a sphere \cite{Stewart1976:PDFSurvey}, the ideas have since been generalized enough to handle functions defined on other mathematical objects. Kernels have been studied that are positive definite on various Riemannian manifolds \cite{HangelbroekNarcWard2010:ManifoldKernelsI,Narc1995:kernelsonmanifolds,LevesleyRagozin:2007RBFsHomogenousManifolds}, and there are also kernels that exploit the group structure of their underlying manifold, with domains such Lie groups, projective spaces, and motion groups \cite{FilbirErb2008:PDFsOnCompactGroups, FilbirSchmid2008:SO3Stability, ZuCastellFilbir2007:MotionGroups}. These kernels are obtained in various ways in the literature. Some are defined through manifold charts, some are acquired via a special expansion in terms of eigenfunctions of the Laplace-Beltrami operator, and others are assumed to be the Green's function for some pseudo-differential operator. Here the kernels can be highly dependent on the underlying manifold.

In all of the cases discussed above, the domains involved are finite-dimensional smooth manifolds and, according to the Whitney embedding theorem, any such manifold $\M$ can be embedded into some $\R^d$ (in fact, Nash's embedding theorem guarantees that this can be done isometrically). Positive definite kernels on $\R^d$ are easy to come by, so another, seemingly naive, way of obtaining a positive definite kernel on $\M$ is simply by the restriction of a kernel defined on the ambient space.  More precisely, given a kernel $\phi:\R^d\times\R^d\rightarrow \R$ and an embedded manifold $\M\subset\R^d$, we define
\[
\psi(\cdot\,,\cdot):= \phi(\cdot\,,\cdot)|_{\M\times \M}.
\]
If $\phi$ is positive definite, so is its restriction to $\M$, making $\psi$ well-suited for interpolation problems. Although the practical value of using such a kernel is not clear in all of the aforementioned situations, in this paper we will see that it is theoretically and numerically possible to approximate functions defined on manifolds with simple kernels defined on the ambient space.  

While the benefits of approximating with a kernel intrinsic to the manifold cannot be questioned, there are many applications for which an incomplete knowledge or manageable mathematical description of the underlying manifold may prevent the construction of the intrinsic kernel.  For example, in problems from computer aided design, graphics and imaging, and computer aided engineering the manifold may be a physically relevant geometric object (such as an airplane wing) and a scalar field (such as pressure or temperature) may need to be interpolated at arbitrary locations over the object~\cite{Alfeld:1996,Bajaj:1995,BarnhillOu:1990,Davydov:2007,Foley:1990,Neamtu:2002}.  Additionally, in problems from learning theory, data samples are from a very high dimensional space, but are usually assumed to lie on a relatively low dimensional embedded submanifold that is virtually unknown~\cite{BelkinNiyogi2004:LearningOnRiemannianManifolds}.  Finally, there has been much recent interest in approximating derivatives of scalar and vector valued quantities on manifolds in both the graphics (cf.~\cite{Clarenz:2000,Desbrun:1999,Stam:2003}) and computational fluid dynamics (cf.~\cite{Adalsteinsson:2003,Calhoun:2009,Novak:2007,Sbalzarini:2006,Schwartz:2005}) communities.  These approximations are typically used for numerically solving partial differential equations defined on manifolds (such as the surface of a biological cell or membrane) for modeling processes like advection-reaction-diffusion of chemicals or fluid flows on the surfaces.  Reconstruction of a function on the underlying manifold is typically first required to then approximate its derivative.

In all the applications referenced above, the manifold could be represented by a triangular mesh, points on an implicit function (level set), or more generally by a point cloud in $\R^3$.  Since the restricted kernel method under consideration in this study is mesh-free, it applies easily to all of these cases.  Additionally, the kernel's smoothness can be increased so that derivatives of the interpolant are well-defined everywhere on the surface.  Finally, the method is based on extrinsic coordinates, which naturally bypasses any coordinate singularities inherent to a manifold-based coordinate system, as shown for the unit sphere $\Sphere^2$ in~\cite{FlyerWright09}.

To the authors' knowledge, these restricted kernels have only been studied in the special case of $\M = \Sphere^{d-1}\subset\R^d$ \cite{NarcSunWard2007:SBFRBFconnection,zuCastellFilbir2005:RestrictedRBFs}. The kernels considered in these papers are \emph{radial} on $\R^d$, i.e. $\phi(x,y)=\phi(\|x - y\|)$, and it just so happens that the restriction of a radial function on $\R^d$ to the sphere is \emph{zonal}, meaning that it depends only on the geodesic distance between its arguments. Even on more general manifolds, when the kernel possesses such a property there are powerful tools at one's disposal - most notably one has a convenient Fourier expansion of the kernel in terms of eigenfunctions of the Laplace-Beltrami operator \cite{LevesleyRagozin2007:DensityOfZonalKernels}. Of course, in the setting we consider here the restricted kernel will not necessarily be zonal. To circumvent this and other issues, we will appeal to the variational theory of Madych and Nelson, and also to the error analysis of Narcowich \emph{et. al.} \cite{MadychNelson1988:MultivariateInterpolation, NarcWardWend:2006bernstein}.

The paper is organized as follows. In the next section we introduce the necessary preliminaries, notation for manifolds, Sobolev spaces, and other essential tools.  After that, we will characterize the native space, in terms of concrete function spaces, for a large class of kernels that have been restricted to embedded submanifolds. We will then make a brief detour into measuring the distribution of sample points on embedded submanifolds. This will poise us to present interpolation error estimates for both smooth and rough target functions, which are the main results of the paper.  We conclude with numerical results verifying the two main error estimates from the paper.

\section{Notation and Preliminaries}

We will restrict our study to smooth, connected, compact manifolds with no boundary. For the reader unfamiliar with manifolds, an excellent reference is Lee's book \cite{Lee2003:smoothmanifolds}. A $k$-dimensional manifold $\M$ is defined as a topological space $\M$ which is locally identified with $\R^k$ via a collection of smoothly compatible coordinate charts. More specifically, there is an \emph{atlas} $\calA = \{(U_j,\Psi_j)\}$ of open sets $U_j\subset \M$ whose union covers $\M$, and associated smooth one-to-one maps $\Psi_j:U_j\rightarrow \R^k$ such that any transition map $\Psi_j\circ\Psi_k^{-1}$ is a smooth map where it is defined. By refining the charts as necessary, one may assume that the image of any chart is equal to an open ball around the origin. Also, since the manifolds we consider are compact, we can obviously assume that any atlas encountered has finitely many charts.

\subsection{Embedded Submanifolds in $\R^d$}

In addition to the features described previously, the manifolds we consider throughout the paper will be embedded submanifolds of $\R^d$.  However, we warn that authors use the term ``embedding'' to mean different things in similar situations. This topic can be subtle, so a precise statement of what we consider to be an ``embedded submanifold'' is in order. We will follow the definition used in \cite[Chapter 7]{Lee2003:smoothmanifolds}, although we state it in less generality. This will require a few preliminary terms. 

Let $\M\subset \R^d$ be a smooth manifold endowed with the subspace topology. Given $x\in \M$, we will denote the tangent space of $\M$ at $x$ by $T_x \M$. If $F:\M\rightarrow \R^d$ is a smooth map, the \emph{rank} of $F$ at $x\in \M$ is the rank of the Jacobian map $F_*:T_x \M\rightarrow T_{F(x)}\R^d$. A smooth map is called an \emph{immersion} if $F_*$ is injective at each point, i.e. the rank of $F$ is equal to $\mbox{dim}\,(\M)$. We say that a manifold $\M$ is an \emph{embedded submanifold} of $\R^d$ if the inclusion map $\imath:\M\hookrightarrow\R^d$ is also an immersion. Some authors call such manifolds \emph{regular submanifolds}. 

\subsubsection{Slice Charts}

There is an equivalent, local characterization of embedded submanifolds, which uses charts that utilize the ambient space \cite[Chapter 8]{Lee2003:smoothmanifolds}.  Given an embedded $k$-dimensional submanifold $\M\subset\R^d$, there is an atlas $\widetilde{\calA}=\{(\widetilde{U}_j,\widetilde{\Psi}_j)\}$, where the sets $\widetilde{U}_j$ are open in $\R^d$ and cover $\M$, and each $\widetilde{\Psi}_j$ is a $1-1$ smooth map from $\widetilde{U}_j$ to some ball around the origin, say $B(0,r_j)$, that ``straightens'' the manifold. By this we mean that \[\widetilde{\Psi}_j(\M\cap \widetilde{U}_j) \subset B'(0,r_j),\] 
where $B'(0,r_j) = \{y\in B(0,r_j) \,\,|\,\, y_{k+1}=y_{k+2}=\ldots=y_d=0\}$ can be viewed as a copy of an open ball in $\R^k$. The charts $(\widetilde{U}_j,\widetilde{\Psi}_j)$ are sometimes referred to as \emph{slice charts}. If we define $U_j := \M\cap \widetilde{U}_j$ and $\Psi_j:=\widetilde{\Psi}_j|_{U_j}$, then $\calA = \{(\Psi_j,U_j)\}$ is an atlas for $\M$ in the usual sense. As before, one can assume that $\Psi_j:U_j\rightarrow B'(0,r_j)$ and $\widetilde{\Psi}_j:\widetilde{U}_j\rightarrow B(0,r_j)$ are bijections without any loss of generality. 

\subsubsection{Distances on Embedded Submanifolds}

If $\M$ is an embedded submanifold of $\R^d$, its topology is naturally induced by the Euclidean metric. This being the case, $\M$ automatically inherits a distance function  $d_\M:\M\times \M\rightarrow \R$. Assuming $\M$ is connected, given $x,y\in \M$ we can define the distance between $x$ and $y$ to be 
\begin{equation*}
d_\M(x,y) := \inf_{\underset{\underset{\gamma(1)=y}{\gamma(0)=x}}{\gamma:[0,1]\rightarrow \M}}\int_{0}^1\left\|\gamma'(t)\right\|\,dt,
\end{equation*}
where $\gamma$ is any piecewise smooth curve in $\M$ beginning at $x$ and ending at $y$, and $\|\gamma'(t)\|$ is the Euclidean length of the tangent vector $\gamma'(t)$. Given an $x\in \M$, we denote by $B_\M(x,r)$ the open ball of radius $r$ centered at $x$, i.e. $B_\M(x,r) = \{y\in \M \,\,\,| \,\,\,d_\M(y,x)< r\}$. In the case where the underlying manifold is $\R^d$ we omit the subscript and simply write $B(x,r)$. 

\subsubsection{Tubular Neighborhoods}

It is well known that any embedded submanifold of $\R^d$ has a tubular neighborhood, which is a neighborhood of $\M$ in $\R^d$ analogous to a tube around a curve in $3$-space. A precise definition of a tubular neighborhood would require much more notation than we need here, so we omit these details and instead present a useful consequence of its existence. The interested reader can find a full discussion of the subject in most introductory books on smooth manifolds \cite{Lee2003:smoothmanifolds}.

\begin{proposition}\label{proposition:tubularngd}
Let $\M$ be a compact, smooth embedded submanifold of $\R^d$. Then there exists a neighborhood $U_{\epsilon}(\M):=\{y\in \R^d \,|\,{\rm dist}(y,\M)<\epsilon\}$ and a canonical smooth map $\mathfrak{R}:U_\epsilon(\M)\rightarrow \M$ such that $\mathfrak{R}|_{\M}$ is the identity map on $\M$.
\end{proposition}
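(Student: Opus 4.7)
The plan is to construct $\mathfrak{R}$ as the nearest-point projection onto $\M$, obtained by inverting a diffeomorphism from a neighborhood of the zero section of the normal bundle onto a neighborhood of $\M$ in $\R^d$. First I would form the normal bundle $N\M = \{(x,v) \in \M \times \R^d : v \in (T_x\M)^{\perp}\}$, which is a smooth manifold of dimension $d$ by standard arguments (using slice charts, $N\M$ is locally trivial over each $U_j$). Then define the smooth map
\[
E : N\M \longrightarrow \R^d, \qquad E(x,v) = x + v.
\]
The retraction $\mathfrak{R}$ will be $\pi \circ E^{-1}$, where $\pi : N\M \to \M$ is the bundle projection $\pi(x,v) = x$.

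Next I would verify via the inverse function theorem that $E$ is a local diffeomorphism at every point $(x,0)$ of the zero section. At such a point, $T_{(x,0)} N\M$ splits canonically as $T_x\M \oplus (T_x\M)^{\perp}$, and the differential of $E$ sends $(\xi,\eta)$ to $\xi + \eta \in T_x \R^d = \R^d$; this is just the orthogonal decomposition, hence an isomorphism. So for each $x \in \M$ there is an open neighborhood $V_x \subset N\M$ of $(x,0)$ on which $E$ is a diffeomorphism onto an open set in $\R^d$.

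The key compactness step is to upgrade these local diffeomorphisms to a single tube of uniform radius. Since $\M$ is compact, a standard argument (cover the zero section by finitely many such $V_x$, then use a Lebesgue-number-type estimate to produce a uniform $\epsilon>0$ and show injectivity on $N_\epsilon\M := \{(x,v) \in N\M : \|v\| < \epsilon\}$) yields that $E$ restricted to $N_\epsilon\M$ is a diffeomorphism onto its image $U_\epsilon(\M)$. Injectivity on all of $N_\epsilon\M$ is the main obstacle here: one supposes for contradiction a sequence $(x_n,v_n), (x_n',v_n') \in N\M$ with $\|v_n\|,\|v_n'\| \to 0$ and $E(x_n,v_n) = E(x_n',v_n')$ but $(x_n,v_n) \ne (x_n',v_n')$; passing to subsequences and using compactness of $\M$, one extracts convergent limits in $\M$ and contradicts the local injectivity at some $(x_*,0)$. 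One also verifies that $U_\epsilon(\M)$ agrees (perhaps after shrinking $\epsilon$) with $\{y \in \R^d : \mathrm{dist}(y,\M) < \epsilon\}$, since any $y$ within Euclidean distance $<\epsilon$ of $\M$ achieves its distance at some $x \in \M$ where $y-x \in (T_x\M)^{\perp}$.

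Finally, I would define $\mathfrak{R} := \pi \circ (E|_{N_\epsilon\M})^{-1}$. Smoothness is inherited from $E^{-1}$ and $\pi$. For the identity property, note that for $x \in \M$ we have $E(x,0) = x$, so $(E|_{N_\epsilon\M})^{-1}(x) = (x,0)$ and hence $\mathfrak{R}(x) = \pi(x,0) = x$, which proves $\mathfrak{R}|_\M = \mathrm{id}_\M$. The canonical nature of $\mathfrak{R}$ follows from the geometric description: $\mathfrak{R}(y)$ is the unique point of $\M$ nearest to $y$ in the Euclidean metric.
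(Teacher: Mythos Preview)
The paper does not actually prove this proposition; it is stated as a well-known consequence of the tubular neighborhood theorem, with a reference to Lee's textbook \cite{Lee2003:smoothmanifolds} in lieu of a proof. Your sketch is precisely the standard argument one finds there: build the normal bundle, show the exponential-type map $E(x,v)=x+v$ is a local diffeomorphism along the zero section via the inverse function theorem, use compactness of $\M$ to obtain a uniform $\epsilon$ on which $E$ is globally injective, and set $\mathfrak{R}=\pi\circ E^{-1}$. The sequential compactness argument for global injectivity and the identification of the image with $\{y:\mathrm{dist}(y,\M)<\epsilon\}$ via nearest-point projection are both correct as outlined. So your proposal is sound and matches the approach in the cited reference; there is simply nothing in the paper itself to compare it against.
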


\noindent In this case we call $\epsilon$ the \emph{radius} of the tubular neighborhood $U_\epsilon(\M)$, and the map $\mathfrak{R}$ is called a \emph{retraction}.  We remark that by restricting the radius of the tubular neighborhood slightly we can assume that the domain of $\mathfrak{R}$ is compact.

\subsection{Sobolev Spaces}

The class of functions we will be interested in approximating are from the Sobolev spaces, which are spaces that consist of all $f\in L^{p}$ that have distributional derivatives $D^{\alpha}f$ in $L^{p}$ for all multi-indices up to some order.  For Sobolev spaces on $\R^d$, we will follow the notation of Adams \cite{Adams:1975SobolevSpaces}. Let $\Omega$ be a neighborhood in $\R^d$, $1\leq p<\infty$, and $m$ be a nonnegative integer. The associated Sobolev norms are defined via 
\begin{equation}
\|f\|_{W^{m}_{p}(\Omega)}:=\left(\sum_{|\alpha|\leq m}\|D^{\alpha}f\|^{p}_{L^{p}(\Omega)}   \right)^{1/p}.\nonumber
\end{equation}
For the case $p=\infty$ we have
\begin{equation}
\|f\|_{W^{m}_{\infty}(\Omega)}:=\max_{|\alpha|\leq m}\|D^{\alpha}f\|_{L^{\infty}(\Omega)}.\nonumber
\end{equation}
It is also possible to have Sobolev spaces of fractional order. Let $1\leq p < \infty$, $m$ be a non-negative integer, and $0<t<1$. We define the Sobolev space $W^{m+t}_{p}(\Omega)$ to be all $f$ such that the following norm is finite:
\begin{equation}
\|f\|_{W^{m+t}_{p}(\Omega)}:=\left(\|f\|_{W^{m}_{p}(\Omega)}^{p}+ \sum_{|\alpha|=k}\int_{\Omega}\int_{\Omega}\frac{\left|D^{\alpha}f(x)-D^{\alpha}f(y)\right|^{p}}{\|x-y\|_2^{d+pt}}dxdy\right)^{1/p}.\nonumber
\end{equation}

We define the Sobolev spaces on embedded submanifolds as follows. Let $\M\subset \R^d$ be a compact submanifold of dimension $k$. Let $\widetilde{\calA} = \{(\widetilde{U}_j,\widetilde{\Psi}_j)\}$ be an atlas of slice charts for $\M$, and let $\calA = \{(U_j,\Psi_j)\}$ be the associated intrinsic atlas. Now let $\{\chi_j\}$ be a partition of unity subordinate to $\{\widetilde{U}_j\}$. If $f$ is a function defined on $\M$, we have the projections $\pi_j(f):\R^k\rightarrow\R$ by
\begin{equation*}
\pi_j(f)(y) = \left\{
\begin{array}{cl}
\chi_j f (\Psi_j^{-1}(y)) & y\in B'(0,r_j) \\
0 & \mbox{otherwise.}
\end{array}\right.
\end{equation*}  
Using this construction, one can now define Sobolev spaces for $1\leq p<\infty$ and $s\geq 0$ via the norms
\[\|f\|_{W^s_p(\M)} := \left(\sum_{j=2}^N\| \pi_j(f)\|^2_{W^s_p(\R^k)}\right)^{1/2},\]
where $N$ is obviously the number of charts in the atlas. The norm for $W^s_p(\M)$ obviously depends on the particular choice of atlas $\widetilde{\calA}$ and the partition of unity. However, if one uses different collections of these objects, the same space arises and the norms are equivalent (the details for the case $d=k-1$ can be found in Lions and Magenes \cite{Lions:1961nonhomogeneous}; for general $k<d$ the argument is similar). Also, as is customary in the case $p=2$ we define $H^s(\M) := W^s_2(\M)$. 

\subsection{Traces}

Given any subset $\Omega\subset\R^d$, one has a continuous trace operator $T_\Omega:C(\R^d)\rightarrow C(\Omega)$ that acts on functions by restricting them to $\Omega$, i.e. $T_\Omega(f) = f|_\Omega$. Extending the trace operator to other classes of functions is a well-studied subject (see \cite[Chapter VII]{Adams:1975SobolevSpaces} or \cite[Chapter 5]{BesovEtAl1979:IntegralRep}).  In the case when $\Omega$ is a submanifold of $\R^d$, the basic idea is that restricting a function $f\in W^{\tau}_p(\R^d)$ to the submanifold ``costs'' $1/p$ units of smoothness for each dimension, e.g. if $\M$ is a smooth submanifold of dimension $k$, then $f|_\M\in  W^{\tau - (d-k)/p}_p(\M)$. However, there are restrictions. For example, when $p>2$ the imbeddings do not necessarily hold when $\tau - (d-k)/p$ is a nonnegative integer \cite[Theorem 7.58]{Adams:1975SobolevSpaces}. One can get around this and other subtle issues by instead considering Besov spaces. Nevertheless, in the case we will be interested in, i.e. $p=2$, we will not have to shift our focus to these spaces. The following is the trace result we require (see, for example \cite[Section 25]{BesovEtAl1979:IntegralRep}).

\begin{proposition}\label{proposition:trace}
Let $\tau > 0$ and $1\leq k\leq d$. Let $\M$ be a smooth $k$-dimensional compact embedded submanifold of $\R^d$. Then the trace operator $T_\M$ extends to a continuous operator mapping $H^\tau(\R^d)$ onto $H^{\tau - (d-k)/2}(\M)$. Further, there is a reverse imbedding, i.e. there is a bounded linear map $E_{\M}:H^{\tau - (d-k)/2}(\M)\rightarrow H^\tau(\R^d)$ such that $E_{\M}u |_\M = u$ for all $u\in H^{\tau - (d-k)/2}(\M)$.  
\end{proposition}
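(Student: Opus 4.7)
The plan is to reduce both assertions to the classical trace/extension theorem for the hyperplane $\R^k\times\{0\}^{d-k}\hookrightarrow\R^d$, using the slice-chart atlas $\widetilde{\calA}=\{(\widetilde{U}_j,\widetilde{\Psi}_j)\}$ and the partition of unity $\{\chi_j\}$ already introduced above. I would begin by selecting, in addition, bump functions $\widetilde{\chi}_j\in C_c^\infty(\widetilde{U}_j)$ equal to $1$ on a neighborhood of $\operatorname{supp}\chi_j$. Since each $\widetilde{\Psi}_j$ is a smooth diffeomorphism between open subsets of $\R^d$, pullback and pushforward under $\widetilde{\Psi}_j^{\pm 1}$ induce bounded isomorphisms between the corresponding $H^\tau$ spaces of compactly supported functions, so localization and flattening do not cost any smoothness.

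For the direct trace, given $f\in H^\tau(\R^d)$ set $g_j:=(\chi_j f)\circ\widetilde{\Psi}_j^{-1}$, extended by zero. Each $g_j$ lies in $H^\tau(\R^d)$ with norm bounded by $\|f\|_{H^\tau(\R^d)}$. I would then apply the flat-hyperplane trace theorem: writing $\xi=(\xi',\xi'')\in\R^k\times\R^{d-k}$, the identity $\widehat{g_j|_{\R^k}}(\xi')=\int\widehat{g_j}(\xi',\xi'')\,d\xi''$, combined with Cauchy--Schwarz against the weight $(1+|\xi|^2)^{-\tau}$ (whose $\xi''$-integral is comparable to $(1+|\xi'|^2)^{-\tau+(d-k)/2}$), yields continuity of the trace from $H^\tau(\R^d)$ into $H^{\tau-(d-k)/2}(\R^k)$. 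By construction the result equals $\pi_j(f|_\M)$, so summing squared norms over $j$ gives $\|f|_\M\|_{H^{\tau-(d-k)/2}(\M)}\lesssim\|f\|_{H^\tau(\R^d)}$.

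For the extension $E_\M$, I would use a model extension operator $\mathcal{E}:H^{\tau-(d-k)/2}(\R^k)\to H^\tau(\R^d)$ with $\mathcal{E}u|_{\R^k\times\{0\}}=u$; the standard Fourier construction $\widehat{\mathcal{E}u}(\xi',\xi'')=\widehat{u}(\xi')\,\Phi(\xi''/\langle\xi'\rangle)/\langle\xi'\rangle^{d-k}$ for a fixed $\Phi\in C_c^\infty(\R^{d-k})$ with $\int\Phi=1$ does the job, the norm estimate following from the change of variables $\eta=\xi''/\langle\xi'\rangle$ which turns $(1+|\xi|^2)^{\tau}$ into $\langle\xi'\rangle^{2\tau}(1+|\eta|^2)^\tau$. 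Given $u\in H^{\tau-(d-k)/2}(\M)$ I would then define $v_j:=\widetilde{\chi}_j\cdot(\mathcal{E}\pi_j(u))\circ\widetilde{\Psi}_j$, extended by zero, and set $E_\M u:=\sum_j v_j$. Boundedness of each $v_j$ in $H^\tau(\R^d)$ follows from diffeomorphism invariance and boundedness of $\mathcal{E}$, while on $\M\cap\widetilde{U}_j$, using $\widetilde{\chi}_j\equiv 1$ on $\operatorname{supp}\chi_j$ together with $\mathcal{E}\pi_j(u)|_{B'(0,r_j)}=\pi_j(u)$, one computes $v_j|_\M=\chi_j u$, so $E_\M u|_\M=\sum_j\chi_j u=u$; surjectivity of $T_\M$ follows.

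The main obstacle I anticipate is not analytic but bookkeeping: ensuring the cutoffs $\chi_j,\widetilde{\chi}_j$ and the partition interact correctly so that (i) the extension truly restricts to $u$ on $\M$ and (ii) the piecewise norms built from the $\pi_j$ really do control the global $H^\tau(\R^d)$ norm of the sum. The heart of the matter---the trace and extension for a single flat hyperplane---is a classical Fourier argument, while all of the geometric content is absorbed by the slice charts, which by definition straighten $\M$ into a piece of $\R^k\times\{0\}^{d-k}$.
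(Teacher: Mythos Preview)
Your proof plan is sound and follows the standard route: localize via the slice-chart atlas and partition of unity, reduce to the flat model $\R^k\times\{0\}^{d-k}\hookrightarrow\R^d$, and handle that case by Fourier analysis. The trace estimate via Cauchy--Schwarz against $(1+|\xi|^2)^{-\tau}$ and the extension operator $\mathcal{E}$ defined through $\widehat{\mathcal{E}u}(\xi',\xi'')=\widehat{u}(\xi')\,\Phi(\xi''/\langle\xi'\rangle)\langle\xi'\rangle^{-(d-k)}$ are both correct, and your bookkeeping with the auxiliary cutoffs $\widetilde{\chi}_j$ is exactly what is needed to make the glued extension restrict to $u$ on $\M$. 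The observation that $\mathcal{E}$ is defined independently of $\tau$ matches the emphasis placed on this point immediately after the proposition.

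There is nothing to compare against, however: the paper does not prove this proposition at all. It is quoted as a known result with a reference to \cite[Section~25]{BesovEtAl1979:IntegralRep}, and the argument there is essentially the one you outline. So you have supplied a proof where the paper simply cites one.

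One small caveat worth flagging: your Cauchy--Schwarz step for the flat trace requires $\int_{\R^{d-k}}(1+|\xi'|^2+|\xi''|^2)^{-\tau}\,d\xi''<\infty$, i.e.\ $\tau>(d-k)/2$, whereas the proposition as stated only assumes $\tau>0$. This is a looseness in the \emph{statement} rather than in your argument, and it is harmless for the paper since every application has $\tau>d/2$; but if you want your proof to cover the full stated range you would need to interpret the trace in the distributional sense and argue a bit more carefully for $0<\tau\le(d-k)/2$.
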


\noindent We emphasize that the extension $E_\M$ is independent of $\tau$.

\subsection{Positive Definite Kernels and Native Spaces}

Let $\Omega\subset\R^d$, and recall that an $N\times N$ matrix $A$ is positive definite if given any nonzero $c\in \R^N$, the quadratic form $c^TAc$ is strictly positive. We say that a kernel $\phi:\Omega\times\Omega\rightarrow\R$ is \emph{positive definite} on $\Omega$ if given any finite set of distinct nodes $\{x_1,x_2,\ldots,x_N \}\subset\Omega$, the $N\times N$ Gram matrix with entries $A_{i,j}=\phi(x_i,x_j)$ is positive definite (and hence invertible). All of the kernels we consider in this paper have this property, so we will take ``kernel'' to mean ``positive definite kernel.'' A kernel that depends only on the distance between its arguments, i.e. $\phi(x,y)=\phi(\|x-y\|)$,  is called a \emph{radial basis function} (RBF). This subclass of kernels is of particular importance; they are essentially one-dimensional.

Typically approximation is well understood for target functions coming from the so-called native space of the kernel, which is a reproducing kernel Hilbert space generated by the kernel. We define the \emph{native space} of a given kernel $\phi$ on $\Omega$ in the usual way, that is by taking the closure of the pre-Hilbert space
\[
F_{\phi}=\left\{f\,\,\left| \,\,f=\sum_{j=1}^N c_j\phi(\cdot\,,x_j),\,\,x_j\in \Omega,\,\,c_j\in \R\right. \right\}
\]
in the inner-product
\[
\left\langle \sum_{j=1}^Nc_j\phi(\cdot\,,x_j),\sum_{k=1}^M d_k\phi(\cdot\,,y_k)\right\rangle_{F_\phi}=\sum_{k=1}^M \sum_{j=1}^N d_k\psi(y_k,x_j)c_j. 
\]
We will denote the native space of $\phi$ by $\mathcal{N}_{\phi}$. 

Defined in this way the native space can seem quite abstract, and characterizing it in terms of more well-known function spaces is helpful in finding classes of functions that can be approximated by shifts of $\phi$. The structure of the native space for kernels of finite smoothness on Euclidean spaces is well-known. To be more specific, if $\phi$ is a kernel on $\R^d$ whose Fourier transform, denoted by $\widehat{\phi}$, has algebraic decay, i.e.
\begin{equation}\label{eq:algdecay}
\widehat{\phi}(\xi)\sim (1+\|\xi\|_2^2)^{-\tau},\,\,\, \tau>d/2,
\end{equation}
then $\mathcal{N}_{\phi} = H^{\tau}(\R^d)$ with equivalent norms. 

We end this section by stating a well-known, indispensable property of kernel interpolants. If $f\in \mathcal{N}_{\phi}$, then $I_{X,\phi}f$ is the orthogonal projection (in $\mathcal{N}_\phi$) of $f$ onto the subspace $\underset{x_j\in X}{\text{span}}\,\phi(\cdot,x_j)$. This immediately gives one
\[\|f - I_{X,\phi}f\|_{\mathcal{N}_{\phi}} \leq \| f \|_{\mathcal{N}_{\phi}} \mbox{\hspace{.2in}and\hspace{.2in}}\|I_{X,\phi}f\|_{\mathcal{N}_{\phi}} \leq \| f \|_{\mathcal{N}_{\phi}}.\]

\section{Native Spaces for Restricted Kernels}

As mentioned previously, one approach to finding functions that can be approximated by a given kernel is to determine the native space of that kernel. In this section we characterize the native space for positive definite kernels that have been restricted to an embedded manifold. Let $\phi(x,y)$ be a positive definite kernel on $\R^d$ satisfying (\ref{eq:algdecay}). Given an embedded submanifold $\M\subset \R^d$, we define the kernel $\psi$ on $\M$ by restricting $\phi$ to the manifold, i.e.
\[
\psi(\cdot\,,\cdot):= \phi(\cdot\,,\cdot)|_{\M\times \M}.
\]
As stated in the introduction, it is clear that $\psi$ inherits the positive definiteness of $\phi$. Thus $\psi$ generates a native space on $\M$, which we denote by $\mathcal{N}_{\psi}$. 

In the case of $\M = \Sphere^{d-1}\subset\R^d$, the native spaces of restricted RBFs have been historically studied by investigating the decay of the kernel's Fourier-Legendre coefficients \cite{NarcSunWard2007:SBFRBFconnection,zuCastellFilbir2005:RestrictedRBFs}. However, the methods used in these papers are difficult to apply when the manifold is more general, e.g. the connection between the intrinsic Fourier coefficients and Fourier transform of the extrinsic kernel might be unclear, so we need to take a different perspective. The arguments we use ultimately rely on the variational approach due to Madych and Nelson, which completely avoids the use of the Fourier transform \cite{MadychNelson1988:MultivariateInterpolation}. The following is from Section 8 of that paper. 

\begin{proposition}\label{proposition:nativechar1}
Let $\Omega \subset \R^d$ and let $\kappa:\Omega\times\Omega\rightarrow \R$ be a positive definite kernel. Finally, let $L(\Omega)$ the set of linear functionals given by finite linear combinations of point evaluations, i.e.,
\[L(\Omega):=\left\{\lambda = \sum_{j=1}^N\alpha_j \delta_{x_j}\,\,\left|\,\,N\in\mathbb{N},\,\alpha\in\R^d,\, x_j\in \Omega \right.\right\}.\]
Then we have $f \in \mathcal{N}_{\kappa}$ if and only if there is a constant $C_f$ so that 
\[|\lambda(f)|\leq C_f \|\lambda\|_{\mathcal{N}_{\kappa}^*}.\]
Further, 
\[\|f\|_{\mathcal{N}_{\kappa}}=\sup_{{\lambda\in L(\Omega)},{\lambda\neq 0}}\frac{\lambda(f)}{\|\lambda\|_{\mathcal{N}_{\kappa}^*}}.\]
\end{proposition}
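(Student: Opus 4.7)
The plan is to treat this as a standard consequence of the reproducing kernel Hilbert space (RKHS) structure of $\mathcal{N}_\kappa$, where the correspondence between $L(\Omega)$ and $F_\kappa$ plays the role of the Riesz isomorphism on the dense subspace $F_\kappa$. The first step is to make explicit the identification $\lambda = \sum_j \alpha_j \delta_{x_j} \leftrightarrow g_\lambda := \sum_j \alpha_j \kappa(\cdot,x_j) \in F_\kappa$. From the very definition of the inner product on $F_\kappa$ we get $\|g_\lambda\|_{\mathcal{N}_\kappa}^2 = \sum_{j,k}\alpha_j \alpha_k \kappa(x_j,x_k)$, and since by construction $\langle h, \kappa(\cdot,x)\rangle_{\mathcal{N}_\kappa} = h(x)$ for every $h \in F_\kappa$, this extends by continuity to the reproducing property $\delta_x(h) = \langle h, \kappa(\cdot,x)\rangle_{\mathcal{N}_\kappa}$ for every $h \in \mathcal{N}_\kappa$. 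Evaluating the dual norm of $\lambda$ via Riesz then yields $\|\lambda\|_{\mathcal{N}_\kappa^*} = \|g_\lambda\|_{\mathcal{N}_\kappa}$.

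With that bookkeeping in place, the forward direction is immediate: if $f \in \mathcal{N}_\kappa$, the reproducing property applied linearly gives $\lambda(f) = \sum_j \alpha_j f(x_j) = \langle f, g_\lambda\rangle_{\mathcal{N}_\kappa}$, and Cauchy--Schwarz delivers $|\lambda(f)| \leq \|f\|_{\mathcal{N}_\kappa}\|\lambda\|_{\mathcal{N}_\kappa^*}$. In particular, one may take $C_f = \|f\|_{\mathcal{N}_\kappa}$, and moreover $\sup_{\lambda \neq 0} \lambda(f)/\|\lambda\|_{\mathcal{N}_\kappa^*} \leq \|f\|_{\mathcal{N}_\kappa}$.

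For the converse, assume $f$ satisfies the bound $|\lambda(f)| \leq C_f \|\lambda\|_{\mathcal{N}_\kappa^*}$ for all $\lambda \in L(\Omega)$. Define a linear functional $\Lambda_f$ on $F_\kappa$ by $\Lambda_f(g_\lambda) := \lambda(f)$. The positive definiteness of $\kappa$ ensures this is well defined (if $g_\lambda = g_\mu$ then $\|g_{\lambda-\mu}\|_{\mathcal{N}_\kappa} = 0$, so by the hypothesis $\lambda(f) = \mu(f)$), and by assumption $|\Lambda_f(g_\lambda)| \leq C_f\|g_\lambda\|_{\mathcal{N}_\kappa}$, so $\Lambda_f$ extends uniquely to a bounded linear functional on the completion $\mathcal{N}_\kappa$. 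The Riesz representation theorem produces an $h \in \mathcal{N}_\kappa$ with $\Lambda_f(g) = \langle h, g\rangle_{\mathcal{N}_\kappa}$ for all $g \in \mathcal{N}_\kappa$. Taking $\lambda = \delta_x$ and using the reproducing property on $h$ gives $f(x) = \Lambda_f(\kappa(\cdot,x)) = \langle h, \kappa(\cdot,x)\rangle_{\mathcal{N}_\kappa} = h(x)$, so $f = h \in \mathcal{N}_\kappa$. Reading off $\|f\|_{\mathcal{N}_\kappa} = \|\Lambda_f\| = \sup_{\lambda \neq 0}|\lambda(f)|/\|\lambda\|_{\mathcal{N}_\kappa^*}$, together with the fact that $L(\Omega)$ is closed under negation (so absolute values may be dropped), establishes the asserted norm formula.

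The only subtle step is the well-definedness and continuous extension of $\Lambda_f$ from $F_\kappa$ to $\mathcal{N}_\kappa$; once one observes that positive definiteness makes the correspondence $\lambda \leftrightarrow g_\lambda$ a genuine isometry of $(L(\Omega),\|\cdot\|_{\mathcal{N}_\kappa^*})$ onto the dense subspace $F_\kappa$, the rest is a routine application of Riesz representation together with the reproducing property. I do not anticipate any technical obstacle beyond verifying this well-definedness carefully.
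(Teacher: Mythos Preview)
Your argument is correct and is the standard RKHS proof of this characterization. Note, however, that the paper does not actually supply its own proof of this proposition: it is stated as a quotation from Section~8 of Madych and Nelson \cite{MadychNelson1988:MultivariateInterpolation}, so there is no in-paper argument to compare against. Your proof --- identifying $\lambda\leftrightarrow g_\lambda$ isometrically, using Cauchy--Schwarz for the forward direction, and for the converse extending the bounded functional $\Lambda_f$ from the dense subspace $F_\kappa$ and invoking Riesz representation together with the reproducing property --- is precisely the expected route and would serve as a self-contained replacement for the citation. The only place to be slightly more explicit is the well-definedness step: you should spell out that $g_\lambda=g_\mu$ forces $\|\lambda-\mu\|_{\mathcal{N}_\kappa^*}=\|g_{\lambda-\mu}\|_{\mathcal{N}_\kappa}=0$, whence the hypothesis gives $|(\lambda-\mu)(f)|\le C_f\cdot 0$; and in reading off the norm formula, note that the supremum over the dense set $F_\kappa$ agrees with the full operator norm of $\Lambda_f$.
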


Before we can prove our results, we need a lemma. The following is due to Schaback \cite[Section 9]{Schaback1999:NativeSpacesI}, and can also be found in Wendland's book \cite[Theorems 10.46 and 10.47]{Wendland:2004}. We include the proof here to illustrate the role Proposition \ref{proposition:nativechar1} in our study.

\begin{lemma}\label{lemma:nativetrace}
Let $\phi$ and $\psi$ be related as above. Then we have the following
\begin{enumerate}
\item There is a natural linear operator $E :\mathcal{N}_{\psi}\rightarrow  \mathcal{N}_{\phi}$ such that $Ef|_{\M} = f$ and 
\[\|E f\|_{\mathcal{N}_{\phi}}= \|f\|_{\mathcal{N}_{\psi}}\]
\item The native spaces of $\phi$ and $\psi$ are related via 
\[
\mathcal{N}_{\psi}=T_{\M}\left(\mathcal{N}_{\phi}\right).
\]
\item The trace operator $T_{\M}:\mathcal{N}_{\phi}\rightarrow  \mathcal{N}_{\psi}$ is continuous with $\|T_{\M}\|\leq 1$.
\end{enumerate}
\end{lemma}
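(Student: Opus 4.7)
The plan is to prove the three parts in the order (1), (3), (2), since part (3) is an immediate corollary of the Madych--Nelson characterization, and part (2) then follows by combining (1) and (3).

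For part (1), I would first define $E$ on the pre-Hilbert space $F_\psi \subset \mathcal{N}_\psi$ of finite linear combinations of shifts of $\psi$ by the natural prescription
\[E\!\left(\sum_{j=1}^N c_j \psi(\cdot,x_j)\right) := \sum_{j=1}^N c_j \phi(\cdot,x_j), \qquad x_j \in \M.\]
Because every $x_j$ lies in $\M$, the definitions of the two native inner products give
\[\left\langle \sum_j c_j\phi(\cdot,x_j),\sum_k d_k\phi(\cdot,y_k)\right\rangle_{\mathcal{N}_\phi}=\sum_{j,k}c_j d_k\phi(x_j,y_k)=\sum_{j,k}c_j d_k\psi(x_j,y_k),\]
which is precisely $\langle\cdot,\cdot\rangle_{\mathcal{N}_\psi}$ on the corresponding shifts of $\psi$. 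So $E$ is an isometry on a dense subspace and extends uniquely to an isometry $E:\mathcal{N}_\psi \to \mathcal{N}_\phi$. The identity $(Ef)|_\M = f$ is obvious on $F_\psi$; to pass to the closure, I would use that point evaluation at any $x \in \M$ is a bounded linear functional both on $\mathcal{N}_\phi$ (RKHS property) and on $\mathcal{N}_\psi$, so if $f_n \to f$ in $\mathcal{N}_\psi$ then $Ef_n \to Ef$ in $\mathcal{N}_\phi$, and evaluating at $x \in \M$ gives $(Ef)(x) = \lim (Ef_n)(x) = \lim f_n(x) = f(x)$.

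For part (3), the workhorse is Proposition \ref{proposition:nativechar1}. Any $\lambda = \sum_j \alpha_j \delta_{x_j}$ with $x_j \in \M$ belongs to both $L(\M)$ and $L(\R^d)$, and its dual norms in $\mathcal{N}_\psi^*$ and $\mathcal{N}_\phi^*$ coincide:
\[\|\lambda\|_{\mathcal{N}_\psi^*}^2 = \sum_{i,j}\alpha_i\alpha_j \psi(x_i,x_j) = \sum_{i,j}\alpha_i\alpha_j \phi(x_i,x_j) = \|\lambda\|_{\mathcal{N}_\phi^*}^2,\]
again because the $x_j$ lie in $\M$. Hence for any $g \in \mathcal{N}_\phi$ and any such $\lambda$,
\[|\lambda(g|_\M)| = |\lambda(g)| \leq \|g\|_{\mathcal{N}_\phi}\|\lambda\|_{\mathcal{N}_\phi^*} = \|g\|_{\mathcal{N}_\phi}\|\lambda\|_{\mathcal{N}_\psi^*}.\]
Proposition \ref{proposition:nativechar1} then yields $g|_\M \in \mathcal{N}_\psi$ with $\|T_\M g\|_{\mathcal{N}_\psi} \leq \|g\|_{\mathcal{N}_\phi}$, which is (3).

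Part (2) now follows painlessly: (3) gives $T_\M(\mathcal{N}_\phi) \subseteq \mathcal{N}_\psi$, while (1) gives the reverse inclusion, since every $f \in \mathcal{N}_\psi$ can be written as $f = (Ef)|_\M = T_\M(Ef)$ with $Ef \in \mathcal{N}_\phi$. I expect the only real subtlety, and the step one must be careful about, is the extension of $E$ in part (1): one must check that the abstract Cauchy limit in $\mathcal{N}_\phi$ agrees, as an honest function on $\M$, with the limit in $\mathcal{N}_\psi$, and this is exactly where continuity of point evaluation in both reproducing kernel Hilbert spaces is essential. The computational inputs—matching the two inner products on shifts and matching the two dual norms on point-evaluation functionals—are both immediate from the fact that $\psi$ is literally the restriction of $\phi$.
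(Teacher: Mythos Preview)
Your argument is correct and is essentially the same as the paper's: the paper also defines $E$ on $F_\psi$ and extends by density, and uses the equality $\|\lambda\|_{\mathcal{N}_\psi^*}=\|\lambda\|_{\mathcal{N}_\phi^*}$ for $\lambda\in L(\M)$ together with Proposition~\ref{proposition:nativechar1} to obtain both the reverse inclusion in (2) and the norm bound in (3). The only cosmetic difference is the ordering---the paper proves (1), then (2) (deriving the norm inequality of (3) in the process), whereas you isolate (3) first and deduce (2) from (1) and (3); your extra care with point evaluations when passing $(Ef)|_\M=f$ to the closure is a nice clarification the paper leaves implicit.
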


\begin{proof}
First we define the extension operator $E: F_{\psi}\rightarrow F_{\phi}$ by
\[E\left(\sum_{j=1}^N c_j\psi(\cdot,x_j)\right) = \sum_{j=1}^N c_j\phi(\cdot,x_j).\]
Clearly we have that $\|E f\|_{\mathcal{N}_{\phi}}= \|f\|_{\mathcal{N}_{\psi}}$ for all $f\in F_{\psi}$. Now using a density argument we can extend $E$ to map $\mathcal{N}_{\psi}$ to $\mathcal{N}_{\phi}$. Since $E$ preserves norms for the dense subsets, we conclude that $E$ is an isometry.

For 2, it is clear from 1 that $\mathcal{N}_{\psi}\subseteq T_{\M}\left(\mathcal{N}_{\phi}\right)$. Focusing now on the reverse inclusion, let $f\in T_{\M}\left(\mathcal{N}_{\phi}\right)$. Then there is a $g\in\mathcal{N}_{\phi}$ such that $T_{\M}g=f$. By Proposition \ref{proposition:nativechar1}, we need to find a constant $C_f$ such that
\[|\lambda(f)|\leq C_f \|\lambda\|_{\mathcal{N}_{\psi}^*}\]
for all $\lambda \in L(\M)$. First, we have $\lambda(f) = \lambda(g)$ and $\|\lambda\|_{\mathcal{N}_{\psi}^*}=\|\lambda\|_{\mathcal{N}_{\phi}^*}$ for all $\lambda\in L(\M)$, giving
\[|\lambda(f)|=|\lambda(g)|\leq C_g \|\lambda\|_{\mathcal{N}_{\phi}^*}=C_g \|\lambda\|_{\mathcal{N}_{\psi}^*}.\]
Thus $f\in\mathcal{N}_{\psi}$. Further, note that since $L(\M)\subset L(\R^d)$ we have
\begin{eqnarray*}
\|f\|_{\mathcal{N}_{\psi}} & = &\sup_{{\lambda\in L(\M)},{\lambda\neq 0}}\frac{\lambda(f)}{\|\lambda\|_{\mathcal{N}_{\psi}^*}} = \sup_{{\lambda\in L(\M)},{\lambda\neq 0}}\frac{\lambda(g)}{\|\lambda\|_{\mathcal{N}_{\phi}^*}} \\
&\leq &\sup_{{\lambda\in L(\R^d)},{\lambda\neq 0}}\frac{\lambda(g)}{\|\lambda\|_{\mathcal{N}_{\phi}^*}}=\|g\|_{\mathcal{N}_{\phi}}.
\end{eqnarray*}
\noindent This shows that the trace operator is continuous with norm less than one, and this completes the proof.
\end{proof}

Now we are ready to present a characterization for the native space of the restricted kernel.

\begin{theorem}\label{theorem:restrictednative}
If $\phi$ satisfies (\ref{eq:algdecay}), then $\mathcal{N}_{\psi}=H^{\tau-(d-k)/2}(\M)$ with equivalent norms.
\end{theorem}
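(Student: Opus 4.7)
The plan is to assemble Theorem 1 directly from the three ingredients already available: the classical identification $\mathcal{N}_\phi = H^\tau(\R^d)$ with equivalent norms (a consequence of (\ref{eq:algdecay})), the restriction identity $\mathcal{N}_\psi = T_\M(\mathcal{N}_\phi)$ from Lemma \ref{lemma:nativetrace}, and the trace/extension statement in Proposition \ref{proposition:trace}. The set-theoretic equality falls out immediately by chaining these: $\mathcal{N}_\psi = T_\M(\mathcal{N}_\phi) = T_\M(H^\tau(\R^d)) = H^{\tau-(d-k)/2}(\M)$. What actually requires care is norm equivalence, so my proof would be organized around the two continuous inclusions.

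For the inclusion $\mathcal{N}_\psi \hookrightarrow H^{\tau-(d-k)/2}(\M)$, I would take $f \in \mathcal{N}_\psi$ and apply the isometric extension operator $E$ from part (1) of Lemma \ref{lemma:nativetrace} to get $Ef \in \mathcal{N}_\phi$ with $\|Ef\|_{\mathcal{N}_\phi} = \|f\|_{\mathcal{N}_\psi}$ and $T_\M(Ef)=f$. Using the norm equivalence $\mathcal{N}_\phi \cong H^\tau(\R^d)$ followed by continuity of the trace operator from Proposition \ref{proposition:trace}, one obtains
\[
\|f\|_{H^{\tau-(d-k)/2}(\M)} = \|T_\M(Ef)\|_{H^{\tau-(d-k)/2}(\M)} \leq C_1 \|Ef\|_{H^\tau(\R^d)} \leq C_2 \|Ef\|_{\mathcal{N}_\phi} = C_2 \|f\|_{\mathcal{N}_\psi}.
\]

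For the reverse inclusion, I would take $f \in H^{\tau-(d-k)/2}(\M)$ and apply the bounded extension $E_\M$ from Proposition \ref{proposition:trace} to produce $E_\M f \in H^\tau(\R^d) = \mathcal{N}_\phi$ satisfying $T_\M(E_\M f) = f$ and $\|E_\M f\|_{\mathcal{N}_\phi} \leq C_3 \|f\|_{H^{\tau-(d-k)/2}(\M)}$. Then part (3) of Lemma \ref{lemma:nativetrace} (continuity of $T_\M: \mathcal{N}_\phi \to \mathcal{N}_\psi$ with norm at most one) gives $f \in \mathcal{N}_\psi$ with $\|f\|_{\mathcal{N}_\psi} \leq \|E_\M f\|_{\mathcal{N}_\phi} \leq C_3 \|f\|_{H^{\tau-(d-k)/2}(\M)}$.

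I do not anticipate a genuine obstacle here — the work of characterizing the native space has effectively been front-loaded into Lemma \ref{lemma:nativetrace}, and Proposition \ref{proposition:trace} supplies exactly the surjective trace plus bounded right-inverse needed to match up the two Hilbert-space norms. The only subtlety worth flagging is the hypothesis $\tau > d/2$ from (\ref{eq:algdecay}), which guarantees $\tau - (d-k)/2 > k/2$ so that the target Sobolev space on $\M$ is a legitimate reproducing-kernel Hilbert space of continuous functions and point evaluations make sense throughout the argument.
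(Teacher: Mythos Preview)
Your proof is correct and follows essentially the same route as the paper's: the same chain $\mathcal{N}_\psi = T_\M(\mathcal{N}_\phi) = T_\M(H^\tau(\R^d)) = H^{\tau-(d-k)/2}(\M)$ and the same argument for $\|f\|_{H^{\tau-(d-k)/2}(\M)} \leq C\|f\|_{\mathcal{N}_\psi}$ via the isometric extension $E$. The only difference is that the paper obtains the reverse norm inequality by invoking the Interior Mapping (Open Mapping) Theorem rather than your explicit use of the Sobolev extension $E_\M$ together with part (3) of Lemma~\ref{lemma:nativetrace}; both are valid, and your version has the minor advantage of being self-contained.
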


\begin{proof}
Our choice of $\phi$ gives $\mathcal{N}_\phi=H^\tau(\R^d)$ with equivalent norms.  By Proposition \ref{proposition:trace} and Lemma \ref{lemma:nativetrace} we have
\[\mathcal{N}_{\psi} = T_\M(\mathcal{N}_\phi) =  T_\M(H^\tau(\R^d)) = H^{\tau-(d-k)/2}(\M).\]
Next we show that the native space norm dominates the Sobolev norm by a constant factor. Note that $f = T_\M Ef$ and that the trace operator is continuous on the appropriate Sobolev spaces. Now we have
\begin{eqnarray*}
\|f\|_{H^{\tau-(d-k)/2}(\M)} &= & \|T_\M Ef\|_{H^{\tau-(d-k)/2}(\M)} \leq C \|Ef\|_{H^{\tau}(\R^d)} \\
&\leq &C\|Ef\|_{\mathcal{N}_{\phi}} = C\|f\|_{\mathcal{N}_{\psi}}.
\end{eqnarray*}
It is a well-known consequence of the Interior Mapping Theorem that in such situations the norms must be equivalent.
\end{proof}

\section{Interpolation via Restricted Kernels}

We are ready to shift our focus to the approximation of functions within certain Sobolev spaces using restricted kernels. The methods we use are related to those used by Narcowich \emph{et. al.} \cite{NarcWardWend:2006bernstein}, although the present situation is different enough so that there is still work to be done before we can successfully apply their methods. 

\subsection{Node Measures on Submanifolds of $\R^d$}

Given a finite node set $X$ from a metric space $\Theta$, error estimates are typically given in terms of the \emph{fill distance}, or \emph{mesh norm} of the points, which is defined to be
\[h_{X,\Theta} := \sup_{x\in \Theta}\min_{x_j\in X}d_{\Theta}(x,x_j),\]
where $d_{\Theta}$ is the distance metric between $x$ and $y$ intrinsic to $\Theta$. Another important measure is the \emph{separation radius}, given by
\[q_{X,\Theta}:= \min_{\overset{x_j,x_k\in X}{x_j\neq x_k}}\frac{1}{2}d_{\Theta}(x_j,x_k).\]
Since we wish to study approximation on a manifold, it is important that our results be stated in terms of the mesh norm and separation radius intrinsic to the manifold. Note that our node sets simultaneously reside in several different metric spaces, namely $\R^d$, $\M$, and $\R^k$ (through chart mappings). We will need the ``node measures'' on all three of these spaces, and it will be convenient to put them on equal footing. This is not a difficult task, but it is a detail that must be dealt with nonetheless. 

First we show that $q_{X,\M}\sim q_{X,\R^d}$.
\begin{theorem}\label{theorem:equivalent separation radius}
Let $\M\subset \R^d$ be a smooth compact embedded submanifold of dimension $k<d$. Then $d_\M(x,y)\sim \|x - y\|$ for all $x,y\in \M$. In particular, we have $q_{X,\M}\sim q_{X,\R^d}$ for all finite node sets $X\subset \M$.
\end{theorem}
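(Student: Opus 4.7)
The plan is to prove the pointwise equivalence $d_\M(x,y) \sim \|x-y\|$ by splitting it into the trivial lower bound and a non-trivial upper bound, and then to read off the separation-radius claim immediately. The lower bound is automatic: any piecewise smooth curve $\gamma:[0,1]\to\M$ joining $x$ to $y$ satisfies $\int_0^1\|\gamma'(t)\|\,dt \geq \|\gamma(1)-\gamma(0)\| = \|x-y\|$ by the fundamental theorem of calculus applied coordinate-wise together with the triangle inequality, so taking the infimum over admissible curves gives $\|x-y\|\leq d_\M(x,y)$ on all of $\M\times\M$.

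For the upper bound I would invoke the tubular neighborhood from Proposition \ref{proposition:tubularngd}, shrinking its radius slightly as permitted by the remark that follows so that the retraction $\mathfrak{R}:U_\epsilon(\M)\to\M$ is smooth on a compact set. Compactness then forces $L:=\sup\|D\mathfrak{R}\|$ to be finite. For any $x,y\in\M$ with $\|x-y\|<\epsilon$, the Euclidean segment $z_t:=(1-t)x+ty$ satisfies $\mathrm{dist}(z_t,\M)\leq \min(t,1-t)\|x-y\|<\epsilon$, so it lies in $U_\epsilon(\M)$. Then $\gamma(t):=\mathfrak{R}(z_t)$ is a smooth curve in $\M$ joining $x$ to $y$ (using that $\mathfrak{R}|_\M$ is the identity), of length at most $L\|x-y\|$, which gives $d_\M(x,y)\leq L\|x-y\|$ whenever $\|x-y\|<\epsilon$.

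Extending to arbitrary pairs is then a compactness argument: the local bound combined with the triangle inequality implies that $d_\M$ is continuous on $\M\times\M$, hence attains a finite maximum $D$ on this compact product. For $\|x-y\|\geq\epsilon$ one obtains $d_\M(x,y)\leq D\leq (D/\epsilon)\|x-y\|$, and combining this with the local estimate and the trivial lower bound yields the uniform equivalence with constants depending only on $\M$. The claim $q_{X,\M}\sim q_{X,\R^d}$ then follows at once, because both separation radii are minima over the same index set of uniformly comparable quantities. I do not anticipate a real obstacle; the one detail that calls for mild care is the uniform boundedness of $D\mathfrak{R}$, which is precisely why the remark following Proposition \ref{proposition:tubularngd} is invoked to guarantee a compact domain for the retraction.
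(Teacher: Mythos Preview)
Your proposal is correct and follows essentially the same route as the paper: the trivial lower bound, the retraction of the straight segment through the tubular neighborhood for the case $\|x-y\|<\epsilon$, and a compactness argument for $\|x-y\|\geq\epsilon$. The only cosmetic difference is that the paper handles the far case by invoking $\mathrm{diam}(\M)$ directly rather than first establishing continuity of $d_\M$, but the content is the same.
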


\begin{proof}
Clearly we have $\|x - y\|\leq d_\M(x,y)$. Now we find a bound in the other direction. Since $\M$ is smooth and compact, we know from Proposition \ref{proposition:tubularngd} that there is a compact tubular neighborhood $U_\epsilon(\M)$ with normal radius $\epsilon$ and smooth retraction $\mathfrak{R}:U_\epsilon(\M)\rightarrow \M$. Given $x,y \,\,\,\in \M$, we consider two cases: $\|x -y\|\geq \epsilon$ and $\|x -y\|< \epsilon$. If $\| x - y\| \geq \epsilon$, we have
\[
d_{\M}(x,y) = \frac{d_\M(x,y)}{\|x - y\|}\|x - y\| \leq \frac{\mbox{diam}(\M)}{\epsilon}\|x - y\|.
\]

Now assume $0< \|x - y\| < \epsilon$. Notice that $y\in B(x,\epsilon)\subset U_\epsilon(\M)$, so the parameterized line $l:[0,1]\rightarrow \R^d$ starting at $x$ and ending at $y$ is completely contained in $U_\epsilon(\M)$. Thus $\mathfrak{R}\circ l$ is a smooth parameterized curve in $\M$ starting at $x$ and ending at $y$. Now we use the arclength definition of the distance metric and the fact that $l'(t)=y - x$ to get
\begin{eqnarray*}
d_\M(x,y) & = & \inf_{\underset{\underset{\gamma(1)=y}{\gamma(0)=x}}{\gamma:[0,1]\rightarrow \M}}\int_{0}^1\left\|\frac{d\gamma}{dt}\right\|\,dt \leq \int_{0}^1\left\|\frac{d}{dt}(\mathfrak{R}\circ l)\right\|\,dt \\
& = & \int_{0}^1\| (D\mathfrak{R})|_{l(t)}l'(t)\| \,dt \leq \int_{0}^1\|D\mathfrak{R}\|_{\infty} \|x - y\| \,dt \\
& = & \|D\mathfrak{R}\|_{\infty} \|x - y\|,
\end{eqnarray*}
where $\|D\mathfrak{R}\|_{\infty}$ denotes a bound on the matrix norm of the Jacobian matrix $D\mathfrak{R}$, which we know to be bounded since $\mathfrak{R}$ is smooth and its domain is compact. The result now follows.
\end{proof}
\noindent The reader should note that the discrepancy between $q_{X,\M}$ and $q_{X,\R^d}$ could be quite large, e.g. consider nodes taken at the poles of a flattened ball. However, if the node set is sufficiently dense this will not be an issue. 

Now we shift our focus to the mesh norm. The first step is to show that distances are preserved under chart mappings. This has been taken care of in the case of homogeneous manifolds in \cite[Proposition 3.3]{LevesleyRagozin:2007RBFsHomogenousManifolds}, and we cite its proof to deal with our situation. 

\begin{proposition}\label{prop:equivalent local distance}
Let $\M$ be a compact smooth manifold of dimension $k$. Then there exists an atlas $\calA = \{(\Psi_j,U_j)\}$ for $\M$ and associated positive constants $c_1$, $c_2$ such that for all $x,y\in \M$, if $x,y\in U_j$ for some $j$ we have
\[c_1\|\Psi_j(x) - \Psi_j(y)\|\leq d_\M(x,y)\leq c_2\|\Psi_j(x) - \Psi_j(y)\|.\]
\end{proposition}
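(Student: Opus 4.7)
The plan is to exploit the fact that, by compactness, the manifold can be covered by finitely many slice charts on each of which both the chart map and its inverse are uniformly Lipschitz, and then compare the chord length in $\R^k$ to the intrinsic manifold distance via these Lipschitz constants. The main obstacle is the lower bound $c_1\|\Psi_j(x)-\Psi_j(y)\| \leq d_\M(x,y)$, because the infimum defining $d_\M(x,y)$ runs over curves in $\M$ that could leave $U_j$; the straightforward curve-lifting argument only controls curves that stay inside the chart.

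First, I would fix an atlas $\{(\widetilde{U}_j,\widetilde{\Psi}_j)\}$ of slice charts with $\widetilde{\Psi}_j:\widetilde{U}_j \to B(0,r_j)$, and then shrink slightly so that each $\widetilde{\Psi}_j$ extends smoothly to a neighborhood of $\overline{\widetilde{U}}_j$. Compactness of $\M$ (together with Lebesgue-number type reasoning) lets me keep finitely many such charts and still cover $\M$. Setting $U_j := \M\cap\widetilde{U}_j$ and $\Psi_j := \widetilde{\Psi}_j|_{U_j}$, the maps $\Psi_j$ and their inverses $\Psi_j^{-1}:B'(0,r_j)\to U_j$ then have Jacobians bounded uniformly on the relevant compact closures; let $L_j := \|D\widetilde{\Psi}_j\|_\infty$ and $M_j := \|D\Psi_j^{-1}\|_\infty$, both finite.

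For the upper bound, given $x,y\in U_j$, the segment $l(t) = (1-t)\Psi_j(x) + t\Psi_j(y)$ lies entirely in the convex ball $B'(0,r_j)$, so $\gamma := \Psi_j^{-1}\circ l$ is a smooth curve in $\M$ from $x$ to $y$. A direct estimate
\[
d_\M(x,y)\ \leq\ \int_0^1 \|(D\Psi_j^{-1})(l(t))\,l'(t)\|\,dt\ \leq\ M_j\,\|\Psi_j(x)-\Psi_j(y)\|
\]
gives the bound with $c_2 := \max_j M_j$.

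For the lower bound, rather than trying to control every competing curve in the infimum, I would sidestep the issue entirely by using the ambient extension $\widetilde{\Psi}_j$. Since $\Psi_j(x) = \widetilde{\Psi}_j(x)$ and $\widetilde{\Psi}_j$ is Lipschitz with constant $L_j$ on $\overline{\widetilde{U}}_j$, we have $\|\Psi_j(x)-\Psi_j(y)\|\leq L_j\|x-y\|_{\R^d}$. Combining this with the elementary chord-length bound
\[
\|x-y\|_{\R^d}\ =\ \Bigl\|\int_0^1 \gamma'(t)\,dt\Bigr\|\ \leq\ \int_0^1 \|\gamma'(t)\|\,dt
\]
taken over any admissible curve $\gamma$, and then passing to the infimum, yields $\|x-y\|_{\R^d}\leq d_\M(x,y)$, hence $\|\Psi_j(x)-\Psi_j(y)\|\leq L_j\,d_\M(x,y)$. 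Taking $c_1 := 1/\max_j L_j$ finishes the proof, and the constants are uniform because the atlas is finite.
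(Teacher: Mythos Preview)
Your argument is correct and, in fact, more self-contained than the paper's own proof, which simply observes that the chart images may be taken to be convex balls and then cites \cite[Proposition~3.3]{LevesleyRagozin:2007RBFsHomogenousManifolds} for the per-chart inequalities before setting $c_1=\min_j c_{1,j}$ and $c_2=\max_j c_{2,j}$. Your upper bound via the straight segment in $B'(0,r_j)$ is presumably what lies behind that citation. Your lower bound, however, takes a genuinely different route: rather than controlling competing curves $\gamma$ that might exit $U_j$ (the usual sticking point in a purely intrinsic argument), you factor through the ambient Euclidean chord length using the slice chart $\widetilde\Psi_j$ and the trivial inequality $\|x-y\|_{\R^d}\le d_\M(x,y)$. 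This is an elegant shortcut that exploits the embedded setting and sidesteps any Riemannian comparison inside the chart.

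One small point worth tightening: deducing $\|\widetilde\Psi_j(x)-\widetilde\Psi_j(y)\|\le L_j\|x-y\|_{\R^d}$ from the derivative bound $L_j=\|D\widetilde\Psi_j\|_\infty$ via the mean value inequality requires the Euclidean segment $[x,y]$ to lie in the domain of $\widetilde\Psi_j$, and $\widetilde U_j$ need not be convex. This is easily arranged---for instance by further shrinking each chart so that its closure sits inside a Euclidean ball on which the original slice map is still defined, or alternatively by extending $\widetilde\Psi_j$ to a compactly supported smooth map on all of $\R^d$ via a cutoff---after which the Lipschitz estimate holds with some finite constant and the rest of your argument goes through unchanged.
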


\begin{proof}
Since $\M$ is a smooth and compact manifold, we get a smooth atlas $\calA = \{(\Psi_j,U_j)\}$ containing finitely many charts for $\M$. By refining our charts as necessary, we may assume that the images of these charts is equal to a ball in $\R^k$ centered around the origin. Since our manifold is smooth and the images of our charts are convex, the arguments in the proof of \cite[Proposition 3.3]{LevesleyRagozin:2007RBFsHomogenousManifolds} follow to find positive constants $c_{1,j}$ and $c_{2,j}$ such that
\[c_{1,j}\|\Psi_j(x) - \Psi_j(y)\|\leq d_\M(x,y)\leq c_{2,j}\|\Psi_j(x) - \Psi_j(y)\|\mbox{   }\forall\,\,x,y\in U_j.\]
Now simply set $c_1:= \min_{j} c_{1,j}$ and $c_2:= \max_{j} c_{2,j}$, and the proof is complete.
\end{proof}

From here on out, we let $\calA = \{(\Psi_j,U_j)\}$ be such an atlas for $\M$, with each $\Psi_j$ being a bijection from $U_j$ to some ball $B'(0,r_j)\subset\R^k$. In the next section we will be able to easily obtain error bounds on each chart in terms of the mesh norm in $\R^k$, $h_{\Psi_j(X),B'(0,r_j)}$. In an effort to use a mesh norm intrinsic to the manifold, consider the following. With the result above, we have:
\begin{equation*}
h_{X\cap U_j,U_j} \sim h_{\Psi_j(X),B'(0,r_j)},
\end{equation*}
which gives us in particular that
\begin{equation}\label{eq: chart mesh bound}
h_{\Psi_j(X),B'(0,r_j)} \leq C h_{X\cap U_j,U_j}.
\end{equation}
Using this, a useful global density of $X$ on $\M$ is $h^*:=\max_j  h_{X\cap U_j,U_j}$, but this is not as strong as the mesh norm $h_{X,\M}$. Indeed, it is not hard to show that $h_{X,\M}\leq h^*$. However, for node sets yielding a mesh norm $h_{X,\M}$ small enough, $h^*$ and $h_{X,\M}$ are equivalent.

\begin{theorem}\label{theroem: equivalent fill distance}
Let $\M$ be a compact smooth manifold of dimension $k$, and let $\calA = \{(\Psi_j,U_j)\}$ be an atlas for $\M$ satisfying Proposition \ref{prop:equivalent local distance}. Then there exists constants $h_0, C>0$ so that for any finite node set $X\subset \M$ with $h_{X,\M}< h_0$, we have 
\[h_{X\cap U_j,U_j}\leq C\, h_{X,\M}\mbox{  for all }U_j.\]
\end{theorem}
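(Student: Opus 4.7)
\medskip\noindent\textbf{Proof proposal.} My plan is to argue chart by chart using the bi-Lipschitz charts $\Psi_j:U_j\to B'(0,r_j)$ from Proposition \ref{prop:equivalent local distance}, together with the fact that $U_j$ is the $\Psi_j$-preimage of a Euclidean ball. For any $x\in U_j$ lying deep inside $U_j$, the point of $X$ nearest to $x$ in $\M$ is automatically in $U_j$, and the desired bound follows trivially with constant $1$. The main obstacle is the boundary case: when $x\in U_j$ is close to $\M\setminus U_j$, the nearest $X$-point may lie outside $U_j$, and we must manufacture a substitute in $X\cap U_j$. I would handle this by a small radial retraction in chart coordinates.

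The first step is a quantitative interior estimate: if $x\in U_j$ and $\|\Psi_j(x)\|\leq r_j-\alpha$, then $d_\M(x,\M\setminus U_j)\geq c_1\alpha$. To prove this, take any piecewise smooth curve $\gamma:[0,1]\to\M$ with $\gamma(0)=x$ and $\gamma(1)\in\M\setminus U_j$, and let $t^*$ be its first exit time from $U_j$. Openness of $U_j$ forces $\gamma(t^*)\notin U_j$, and a compactness argument based on continuity of $\Psi_j^{-1}$ on $\overline{B'(0,r_j-\beta)}$ for every $\beta>0$ shows that $\|\Psi_j(\gamma(t))\|\to r_j$ as $t\uparrow t^*$. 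The lower bound in Proposition \ref{prop:equivalent local distance} then forces the length of $\gamma|_{[0,t^*]}$ to be at least $c_1(r_j-\|\Psi_j(x)\|)\geq c_1\alpha$, and the claim follows by taking the infimum over $\gamma$.

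With this in hand, set $\alpha:=2h_{X,\M}/c_1$ and choose $h_0$ small enough that $\alpha<r_j$ for every chart, e.g.\ $h_0<c_1(\min_j r_j)/4$. Fix $x\in U_j$ with $h_{X,\M}<h_0$ and write $y^*:=\Psi_j(x)$. If $\|y^*\|\leq r_j-\alpha$, the interior estimate gives $d_\M(x,\M\setminus U_j)\geq 2h_{X,\M}$, so the $X$-nearest neighbor of $x$ lies in $X\cap U_j$ and achieves $d_\M(x,x_i)\leq h_{X,\M}$. Otherwise $\|y^*\|>r_j-\alpha$, and I radially retract: set $y'^*:=(r_j-\alpha)y^*/\|y^*\|$ and $x':=\Psi_j^{-1}(y'^*)\in U_j$. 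Then $\|y^*-y'^*\|<\alpha$, so the bi-Lipschitz upper bound yields $d_\M(x,x')\leq c_2\alpha$, while $\|y'^*\|=r_j-\alpha$ places $x'$ in the previous case and produces $x_i\in X\cap U_j$ with $d_\M(x',x_i)\leq h_{X,\M}$. The triangle inequality gives
\[
d_\M(x,x_i)\leq c_2\alpha + h_{X,\M} = \left(\frac{2c_2}{c_1}+1\right) h_{X,\M}.
\]
Taking the supremum over $x\in U_j$ and the maximum over the finitely many charts yields the theorem with $C=2c_2/c_1+1$.
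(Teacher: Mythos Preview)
Your argument is correct, and it reaches the same conclusion by a genuinely different route than the paper. The paper works entirely in chart coordinates: it invokes the interior cone condition of the ball $B'(0,r_j)$ (Lemmas 3.7 and 3.10 of \cite{Wendland:2004}) to locate, for any $\Psi_j(x)$, a closed Euclidean ball $B_1\subset B'(0,r_j)$ of radius $h_{X,\M}/c_1$ centered at some $\Psi_j(y)$ with $\|\Psi_j(x)-\Psi_j(y)\|=h_{X,\M}/(c_1\sin\theta)$; it then argues that $\Psi_j^{-1}(B_1)$ contains a manifold ball of radius at least $h_{X,\M}$ about $y$, forcing a node of $X$ to fall inside $U_j$ at controlled distance from $x$. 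Your proof instead isolates a clean quantitative lemma, $d_\M(x,\M\setminus U_j)\geq c_1\bigl(r_j-\|\Psi_j(x)\|\bigr)$, and then splits into an interior case (where the global nearest node is automatically in $U_j$) and a boundary case handled by a one-step radial retraction to the interior regime. Your approach is a bit more elementary---it needs only the star-shapedness of $B'(0,r_j)$ rather than the cone machinery---and the interior estimate you prove is a reusable statement in its own right; the paper's approach, on the other hand, plugs directly into standard scattered-data tools and avoids the exit-time argument. The resulting constants are comparable: the paper obtains $C=(c_2/c_1)(2+\sqrt{3})/\sqrt{3}$, while you get $C=2c_2/c_1+1$.
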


\begin{proof}
Recall that we may assume that every map $\Psi_j$ is a smooth bijection onto a ball of radius $r_j>0$. Define $r = \min_j r_j$. We will exploit geometry of the situation by using the fact that balls satisfy an interior cone condition. Given a point $x\in \R^k$, unit vector $\xi$, radius $R$, and angle $\theta\in (0,\pi/2)$, we define the cone
\[C(x,\xi,R,\theta):=\{ x + \lambda y\,\,\,|\,\,\,y\in \R^k,\,\|y\|=1,\,y^T\xi\geq \cos(\theta),\,\lambda\in[0,R]\}.\]
Below are the geometric facts we require \cite[Lemma 3.7, Lemma 3.10]{Wendland:2004}:
\begin{enumerate}
\item Let $C(x,\xi, R,\theta)$ be a cone of radius $R$ and angle $\theta$. If $h<R/(1+\sin(\theta))$, then the cone contains a closed ball of radius $h\sin(\theta)$ centered a distance $h$ away from $x$. 
\item Every ball with radius $R>0$ satisfies an interior cone condition with radius $R$ and angle $\theta = \pi/3$, i.e. for every $x$ in the ball, a unit vector $\xi(x)$ exists such that $C(x,\xi(x),R,\theta)$ is contained in the ball.
\end{enumerate}
With this insight, we choose $h_0:= r c_1\sin(\theta)/(1 + \sin(\theta))$, where $\theta = \pi/3$ and  $c_1$ is the constant from the lower bound in Proposition \ref{prop:equivalent local distance}. Assuming $h_{X,\M}\leq h_0$, we will show that given $x\in U_j$, there is a point within $X\cap U_j$ whose distance from $x$ is comparable to $h_{X,\M}$. 

Let $x\in U_j$ and consider the ball $\Psi_j(U_j)$. Since the ball satisfies an interior cone condition with radius $r_j\geq r$, by the remarks above we can find a closed ball of radius $h_{X,\M}/c_1$ centered at some $\Psi_j(y)$ satisfying $\|\Psi_j(x) - \Psi_j(y)\| = h_{X,\M}/(c_1\sin(\theta))$. We denote this ball by $B_1$, and note that we have $\Psi_j^{-1}(B_1)\subset U_j$. Further, since $\Psi_j$ and its inverse are open topologically, $\Psi_j^{-1}(B_1)$ is a neighborhood of $y$ and there is some closed ball centered at $y$ completely contained in $\Psi_j^{-1}(B_1)$. 

Let $\rho$ be the maximum radius of such a ball, i.e. $B(y,\rho)\subset \Psi_j^{-1}(B_1)$ and given any $\rho'>\rho$ there is a $z'\in B(y,\rho')$ such that $z'\notin \Psi_j^{-1}(B_1)$. Note that this maximum radius must be attained at some $z \in \partial{\Psi_j^{-1}(B_1)}$. Also note that $z$ must map to the boundary of $B_1$ through $\Psi_j$: if not, then because $\Psi_j^{-1}$ is open, we could find a neighborhood of $z$ completely contained in $\Psi_j^{-1}(B_1)$, contradicting the fact that it was chosen on the boundary. Thus we have:

\[\rho =  d_{\M}(y,z) \geq c_1 \|\Psi_j(y) - \Psi_j(z)\| = c_1 \left(\frac{h_{X,\M}}{c_1}\right)= h_{X,\M}.\]

Since $\rho\geq h_{X,\M}$, it follows that there must be a point $x_k\in X\cap B(y,\rho)\subset \Psi_j^{-1}(B_1) \subset U_j$.
Now we have:
\begin{eqnarray*}
d_\M(x,x_k) &\leq &d_\M(x,y) + d_\M(y,x_k) \\
& \leq & c_2\|\Psi_j(x) - \Psi_j(y)\| + c_2 \|\Psi_j(x_k) - \Psi_j(y)\|\\
& \leq & c_2 \frac{h_{X,\M}}{c_1\sin(\theta)} + c_2 \frac{h_{X,\M}}{c_1} \leq \frac{c_2}{c_1}\left(\frac{2 + \sqrt{3}}{\sqrt{3}}\right) h_{X,\M} = C h_{X,\M}.
\end{eqnarray*}
Since $x$ and $U_j$ were arbitrary, this proves that $h_{X\cap U_j,U_j}\leq C h_{X,\M}$.
\end{proof}

\subsection{Interpolation Error Estimates}

We now have the tools necessary to provide error bounds. First we present estimates that apply to target functions coming from the native space of the approximating kernel. After this, we will give estimates for target functions that are not smooth enough to be within the native space.

To derive our estimates, we will first make use of the ``many zeros'' Sobolev sampling inequality of Narcowich, Ward and Wendland,  which allows one to extract the appropriate powers of the mesh norm from the error function \cite{NarcWardWendland:2005ScatteredZeros}. Here is a statement of that result, stated in slightly stronger form as in  \cite{NarcWardWend:2006bernstein}.

\begin{proposition}\label{prop:sobolev_zero_est}
Let $\Omega$ be a compact subset of $\R^n$ satisfying an interior cone condition. Let $s>0$, $1\leq p<\infty$, $1\leq q\leq\infty$, and let $\mu$ be an integer satisfying $\lfloor s \rfloor >\mu +n/p$, or $p=1$ and $\lfloor s\rfloor \geq \mu +n$. Also, let $X\subset\Omega$ be a discrete set with mesh norm $h_{X,\Omega}$.  Then there is a constant depending only on $\Omega$ such that if $h_{X,\Omega}\leq C_{\Omega}$ and if $u\in W^{s}_{p}(\Omega)$ satisfies $u|_{X}=0$, then
\begin{equation}
|u|_{W_{q}^{\mu}(\Omega)}\leq Ch_{X,\Omega}^{s-\mu-n(1/p-1/q)_{+}}|u|_{W^{s}_{p}(\Omega)},
\end{equation}
where $(x)_{+}=x$ if $x\geq 0$ and is $0$ otherwise. Here the constant $C$ is independent of $h_{X,\Omega}$ and $u$.
\end{proposition}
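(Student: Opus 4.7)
The plan is to prove the inequality locally on balls of radius proportional to $h_{X,\Omega}$ and then sum. The argument combines a polynomial reproduction estimate on each local ball, a Bramble--Hilbert approximation bound, and an $\ell^p$-to-$\ell^q$ summing step. First, using the interior cone condition on $\Omega$, I would construct a cover $\{B_i = B(x_i,\rho)\}$ of $\Omega$ with $\rho = K h_{X,\Omega}$ and uniformly bounded overlap (depending only on $n$). The constant $K$ is chosen large enough so that each $B_i$ contains an $\lfloor s\rfloor$-unisolvent subset $\Xi_i \subset X$, i.e., a subset on which Lagrange interpolation by polynomials of degree at most $\lfloor s\rfloor$ is solvable with a controlled condition number. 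The cone condition together with the mesh density bound makes this possible by placing small sub-cones inside each $B_i$, each of which must contain at least one point of $X$; this is the standard scattered-data unisolvency construction (see \cite{Wendland:2004}).

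On each ball $B_i$ I would then approximate $u$ by its averaged (Sobolev) Taylor polynomial $P_i$ of order $\lfloor s\rfloor + 1$. The hypothesis $\lfloor s\rfloor > \mu + n/p$ (or the $p=1$ variant) is exactly the Sobolev embedding threshold $W^s_p(B_i) \hookrightarrow C^\mu(B_i)$, and the Bramble--Hilbert lemma, together with a scaling argument on a ball of radius $\rho$, yields
\begin{equation*}
|u - P_i|_{W^\mu_q(B_i)} \;\leq\; C\, \rho^{\,s - \mu - n(1/p - 1/q)_+}\, |u|_{W^s_p(B_i)}.
\end{equation*}
To control $P_i$ itself I would use that $u|_{\Xi_i} = 0$, so $|P_i(x_j)| = |(u - P_i)(x_j)| \leq \|u - P_i\|_{L^\infty(B_i)}$ for every $x_j \in \Xi_i$. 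Unisolvency of $\Xi_i$ combined with a Markov--Bernstein inequality on the ball gives a bound on $|P_i|_{W^\mu_q(B_i)}$ of the same order as the Bramble--Hilbert estimate (after converting $\|u-P_i\|_{L^\infty(B_i)}$ via a local Sobolev embedding on the rescaled ball). Adding the two contributions produces the local inequality
\begin{equation*}
|u|_{W^\mu_q(B_i)} \;\leq\; C\, h_{X,\Omega}^{\,s - \mu - n(1/p - 1/q)_+}\, |u|_{W^s_p(B_i)}.
\end{equation*}

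To finish, I would raise the local estimate to the appropriate power and sum over $i$. Bounded overlap ensures $\sum_i |u|_{W^s_p(B_i)}^p \leq C |u|_{W^s_p(\Omega)}^p$. When $q \geq p$, the embedding $\ell^p \hookrightarrow \ell^q$ handles the recombination on the left-hand side, while the per-ball volume loss has already been absorbed into the $(1/p-1/q)_+$ exponent; when $q < p$, H\"older's inequality on $\Omega$ suffices with no $h$-loss, consistent with $(1/p-1/q)_+ = 0$. This gives the global bound as stated.

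The hardest step is the unisolvent-subset construction together with the accompanying Markov--Bernstein estimate: producing a $\Xi_i \subset X \cap B_i$ whose Lagrange constants are bounded \emph{uniformly in $i$} and scale correctly with $\rho$. Tracking the dependence of $K$ on $n$, $\lfloor s\rfloor$, and the interior cone parameters is the technical heart of the argument. Once those constants are pinned down, the rest is a standard scaling and summation exercise, and the hypothesis on $s$, $\mu$, $n$, $p$ enters only through the Sobolev embedding used in the local Bramble--Hilbert step.
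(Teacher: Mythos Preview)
The paper does not actually prove this proposition: it is quoted verbatim as a known result of Narcowich, Ward, and Wendland \cite{NarcWardWendland:2005ScatteredZeros}, in the slightly strengthened form from \cite{NarcWardWend:2006bernstein}, and is used as a black box in the proofs of Theorems~\ref{theorem: native error estimate} and~\ref{theorem: escape error estimate}. There is therefore no ``paper's own proof'' to compare against.

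That said, your sketch is essentially the argument carried out in those references: a bounded-overlap cover of $\Omega$ by balls (or star-shaped cone pieces) of radius comparable to $h_{X,\Omega}$, a local polynomial reproduction/unisolvency construction using the interior cone condition, a Bramble--Hilbert/Taylor-polynomial estimate together with a Markov--Bernstein inequality to control the interpolating polynomial via the vanishing data, and an $\ell^p$--$\ell^q$ recombination step. You have also correctly isolated the genuinely delicate point, namely producing $\lfloor s\rfloor$-unisolvent subsets $\Xi_i\subset X\cap B_i$ with Lagrange constants bounded uniformly in $i$ and scaling correctly with $\rho$; this is indeed where the interior cone parameters and the threshold $h_{X,\Omega}\le C_\Omega$ enter. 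One small caveat: for fractional $s$ the seminorm $|\cdot|_{W^s_p}$ involves a double integral, so the bounded-overlap summation $\sum_i |u|_{W^s_p(B_i)}^p \le C|u|_{W^s_p(\Omega)}^p$ requires a little more care than in the integer case (one typically works through the integer-order seminorm $|\cdot|_{W^{\lfloor s\rfloor}_p}$ first and then handles the fractional part separately, or appeals to an extension argument). Modulo that detail, your outline matches the standard proof.
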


Recall our notation for kernel interpolants: given a finite node set $X$, kernel $\psi$ and target function $f$, we let $I_{X,\psi}f$ denote the interpolant to $f$ on $X$ found via shifts of $\psi$. Now we are ready to state our first approximation result. 

\begin{theorem}\label{theorem: native error estimate}
Let $\M$ be a $k$-dimensional submanifold of $\R^d$, $\phi$ be a positive definite kernel satisfying (\ref{eq:algdecay}), and define $\psi$ by restricting $\phi$ to $\M$. Let $s = \tau - (d - k)/2$, and let $\mu$, and $q$ be as in Proposition \ref{prop:sobolev_zero_est} with $n=k$ and $p=2$. Then there is a constant $h_\M$ such that if a finite node set $X\subset \M$ satisfies $h_{X,\M}\leq h_\M$, then for all $f\in H^{s}(\M)$ we have
\[
\|f - I_X f\|_{W^{\mu}_{q}(\M)}\leq C h_{X,\M}^{s-\mu-k(1/2-1/q)_{+}}\|f\|_{H^{s}(\M)}.
\]
\end{theorem}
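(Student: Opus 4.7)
The plan is to combine the native-space identification of Theorem \ref{theorem:restrictednative} with a chartwise application of the sampling inequality in Proposition \ref{prop:sobolev_zero_est}. Set $u := f - I_{X,\psi}f$. Since $I_{X,\psi}$ is the orthogonal projection in $\mathcal{N}_\psi$, we have $\|u\|_{\mathcal{N}_\psi} \leq \|f\|_{\mathcal{N}_\psi}$, and Theorem \ref{theorem:restrictednative} converts this into $\|u\|_{H^s(\M)} \leq C\|f\|_{H^s(\M)}$; by the interpolation condition, $u$ vanishes on $X$. Thus the whole problem reduces to proving that, for any $u \in H^s(\M)$ with $u|_X = 0$,
\begin{equation*}
\|u\|_{W^\mu_q(\M)} \leq C\, h_{X,\M}^{s - \mu - k(1/2-1/q)_+}\, \|u\|_{H^s(\M)}.
\end{equation*}

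Next I would localize via the atlas $\calA = \{(\Psi_j,U_j)\}_{j=1}^N$ satisfying Proposition \ref{prop:equivalent local distance}, where each $\Psi_j$ maps onto an open ball $B'(0,r_j) \subset \R^k$, together with the partition of unity $\{\chi_j\}$ used in the definition of the Sobolev norms on $\M$. For each $j$, the pullback $\pi_j(u)(y) = \chi_j(\Psi_j^{-1}(y))\, u(\Psi_j^{-1}(y))$ belongs to $W^s_2(B'(0,r_j))$ (extended by zero to $\R^k$) and vanishes on the finite set $Z_j := \Psi_j(X \cap U_j)$, because $\pi_j(u)(\Psi_j(x)) = \chi_j(x) u(x) = 0$ whenever $x \in X \cap U_j$. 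Since $B'(0,r_j)$ is a Euclidean ball it satisfies an interior cone condition, so Proposition \ref{prop:sobolev_zero_est} with $n=k$, $p=2$ applies to $\pi_j(u)$ on $B'(0,r_j)$, provided the mesh norm $h_{Z_j, B'(0,r_j)}$ is below the chart-dependent threshold $C_{B'(0,r_j)}$.

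To obtain a single, $h_{X,\M}$-driven bound, I would chain inequality (\ref{eq: chart mesh bound}) with Theorem \ref{theroem: equivalent fill distance}: choosing $h_\M$ no larger than the threshold $h_0$ of Theorem \ref{theroem: equivalent fill distance} and also small enough that the chart thresholds are met, one has $h_{Z_j, B'(0,r_j)} \leq C\, h_{X \cap U_j, U_j} \leq C\, h_{X,\M}$ with a constant independent of $j$ (the atlas is finite). Proposition \ref{prop:sobolev_zero_est}, upgraded from seminorm to full norm by a standard Poincaré-type argument exploiting the zero set $Z_j$ (as in \cite{NarcWardWend:2006bernstein}), then gives
\begin{equation*}
\|\pi_j(u)\|_{W^\mu_q(\R^k)} \leq C\, h_{X,\M}^{s - \mu - k(1/2 - 1/q)_+}\, \|\pi_j(u)\|_{W^s_2(\R^k)}
\end{equation*}
uniformly in $j$. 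Taking the $\ell^2$ sum over the finitely many charts and invoking the definitions of $\|\cdot\|_{W^\mu_q(\M)}$ and $\|\cdot\|_{H^s(\M)}$ yields the displayed sampling estimate for $u$, and composing with $\|u\|_{H^s(\M)} \leq C\|f\|_{H^s(\M)}$ from the first paragraph completes the proof.

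The main obstacle I expect is the bookkeeping needed to make the local-to-global step uniform: one must ensure that (i) multiplying by $\chi_j$ preserves the relevant vanishing and does not inflate the $W^s_2$ norm uncontrollably, (ii) the hypotheses of Proposition \ref{prop:sobolev_zero_est} hold simultaneously on every chart for a single threshold $h_\M$, and (iii) the seminorm-to-norm upgrade on $\pi_j(u)$ is legitimate. Each of these is standard but together they justify the single constant $h_\M$ appearing in the statement.
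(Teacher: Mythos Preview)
Your proposal is correct and follows essentially the same route as the paper: chartwise application of the sampling inequality (Proposition \ref{prop:sobolev_zero_est}) to $\pi_j(f-I_Xf)$, with the chart mesh norms controlled via \eqref{eq: chart mesh bound} and Theorem \ref{theroem: equivalent fill distance}, followed by the native-space best-approximation bound $\|f-I_Xf\|_{H^s(\M)}\le C\|f\|_{H^s(\M)}$ from Theorem \ref{theorem:restrictednative}. The only cosmetic difference is that you invoke the native-space step first and then the sampling step, whereas the paper reverses the order; your explicit acknowledgment of the seminorm-to-norm upgrade is a point the paper handles implicitly by citing the strengthened form of the sampling inequality from \cite{NarcWardWend:2006bernstein}.
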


\begin{proof}
Let $\calA = \{(U_j,\Psi_j)\}$ be an atlas for $\M$ satisfying Proposition \ref{prop:equivalent local distance}. We will choose $h_\M$ small enough so that Theorem \ref{theroem: equivalent fill distance} holds, and so that Proposition \ref{prop:sobolev_zero_est} can be applied to the images of all patches in $\R^k$. The norm of the error is given by
\begin{eqnarray*}
\|f - I_X f\|_{W^{\mu}_{q}(\M)} & = & \left(\sum_{j=1}^N\|\pi_j(f - I_X f)\|^2_{W^{\mu}_q(\R^k)}\right)^{1/2}. 
\end{eqnarray*}
Any function projected under $\pi_j$ is supported on $\Psi_j(U_j)$, so we have
\[\|\pi_j(f - I_X f)\|^2_{W^{\mu}_q(\R^k)} = \|\pi_j(f - I_X f)\|^2_{W^{\mu}_q(\Psi_j(U_j))}.\]
Note that $\pi_j(f - I_X f)$ is a Sobolev function with many zeros on the set $\Psi_j(X\cap U_j)$.  Applying Proposition \ref{prop:sobolev_zero_est} and using (\ref{eq: chart mesh bound}) with Theorem \ref{theroem: equivalent fill distance} gives us
\begin{eqnarray*}
\|\pi_j(f - I_X f)\|_{W^{\mu}_q(\Psi_j(U_j))}\leq C h_{X,\M}^{s - \mu - k(1/2 - 1/q)_+}\|\pi_j(f - I_X f)\|_{H^{s}(\Psi_j(U_j))}, 
\end{eqnarray*}
where the constant is independent of $X$ and $f$. Applying this estimate to all patches gives us
\[\|f - I_X f\|_{W^{\mu}_{q}(\M)}\leq C h_{X,\M}^{s-\mu-k(1/2-1/q)_{+}}\|f - I_X f\|_{H^{s}(\M)}.\]

Now recall that since we chose $\phi$ to satisfy (\ref{eq:algdecay}), the native space of $\psi$ is equal to $H^{s}(\M)$ with equivalent norms. Now use this and the fact that the kernel interpolants have a best approximation property to get
\begin{equation*}
\|f - I_X f\|_{H^{s}(\M)} \leq C \|f - I_X f\|_{\mathcal{N}_{\psi}} \leq C \|f\|_{\mathcal{N}_{\psi}} \leq C \|f\|_{H^{s}(\M)}\nonumber.
\end{equation*}
This completes the proof.
\end{proof}


When the target function is very smooth, there is a ``doubling trick'' from spline theory that can be used to increase the order of $h_{X,\M}$ in the error estimates. This was first commented on for RBFs in $\R^d$ by Schaback in \cite{SCHA:1999} and has also been observed in other RBF-related contexts~\cite{FuselierWright:2009VectorDecomposition,MORTON_NEAMTU:2002}. The error doubling on more general domains, including Riemannian manifolds, was considered in \cite{Schaback2000:NativeSpacesII}, and Theorem 5.1 of that paper gives pointwise error estimates for very smooth functions in terms of the so-called \emph{Power Function}. Using similar methods along with Theorem \ref{theorem: native error estimate}, we get Sobolev error estimates doubling the order of $h_{X,\M}$.

A main ingredient for the doubling trick is to rewrite the native space inner product in terms of an $L_2$ inner product of functions that have been acted upon by a pseudodifferential operator depending on the kernel.  This is achieved in $\R^d$ using the Fourier transform, and in the case of the sphere by using eigenfunction expansions of the Laplace-Beltrami operator $\Delta$. For more general domains, one can use a well-known result of Mercer to find the appropriate native space machinery. The tools outlined below can be found in \cite{Schaback2000:NativeSpacesII}.

We define the integral operator $T:L_2(\M)\rightarrow L_2(\M)$ by
\[Tf(x) :=\int_{\M}\psi(x,y)f(y)\,dy\]
Since our kernel is continuous and $\M$ is compact, we may invoke Mercer's theorem (see \cite[Theorem 1.1]{Ferreira2009}, for example). Mercer's theorem guarantees a countable set of positive eigenvalues $\lambda_1\geq \lambda_2\geq \cdots > 0$ and continuous eigenfunctions $\{\varphi_n\}_{n\in \mathbb{N}}$ such that $T\varphi_n=\lambda_n\varphi_n$. Further, $\{\varphi_n\}_{n\in \mathbb{N}}$ provides an orthonormal basis for $L_2(\M)$, and $\psi(x,y)$ has the expansion
\[\psi(x,y) = \sum_{n=1}^{\infty}\lambda_n\varphi_n(x)\varphi_n(y).\]
Lastly, with these tools one has the following characterization of the native space. A proof can be found in \cite[Sections 7,8]{Schaback2000:NativeSpacesII}.
\begin{proposition}
Let $\psi$ be a positive definte kernel on $\M$. Then its native space is given by
\[\mathcal{N}_{\psi}=\left\{f\in L_2(\M)\,\left|\, \sum_{n=1}^{\infty}\frac{1}{\lambda_n} (f,\varphi_n)^2_{L_2(\M)}\right. \right\}.\] 
Also, for $f,g\in \mathcal{N}_{\psi}$ the inner product has the representation
\[\left\langle f,g\right\rangle_{\mathcal{N}_\psi}=\sum_{n=1}^{\infty} \frac{1}{\lambda_n} (f,\varphi_n)_{L_2(\M)}(g,\varphi_n)_{L_2(\M)}.\]
\end{proposition}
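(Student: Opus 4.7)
The plan is to identify the candidate space
\[
\mathcal{H} := \{f\in L_2(\M) : \sum_{n=1}^{\infty}\lambda_n^{-1}(f,\varphi_n)_{L_2(\M)}^2<\infty\},
\]
equipped with the inner product $\langle f,g\rangle_{\mathcal{H}}:=\sum_n\lambda_n^{-1}(f,\varphi_n)_{L_2(\M)}(g,\varphi_n)_{L_2(\M)}$, and then to show that $\mathcal{H}$ is a reproducing kernel Hilbert space whose reproducing kernel is $\psi$. Since the native space $\mathcal{N}_\psi$ is also an RKHS with reproducing kernel $\psi$, the Moore--Aronszajn uniqueness of the kernel--RKHS correspondence will then force $\mathcal{H}=\mathcal{N}_\psi$ with matching inner products, which is exactly the statement to be proved. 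That $\mathcal{H}$ is a Hilbert space is routine: the map $f\mapsto(\lambda_n^{-1/2}(f,\varphi_n)_{L_2(\M)})_n$ is an isometry onto $\ell^2(\mathbb{N})$ since $\{\varphi_n\}$ is an $L_2(\M)$-orthonormal basis.

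The heart of the argument is verification of the reproducing property. Using symmetry of $\psi$ together with the eigenfunction equation $T\varphi_n=\lambda_n\varphi_n$, one computes
\[
(\psi(\cdot,y),\varphi_n)_{L_2(\M)}=T\varphi_n(y)=\lambda_n\varphi_n(y),
\]
so that
\[
\sum_{n=1}^\infty\lambda_n^{-1}(\psi(\cdot,y),\varphi_n)_{L_2(\M)}^2=\sum_{n=1}^\infty\lambda_n\varphi_n(y)^2=\psi(y,y)<\infty
\]
by the Mercer expansion applied diagonally. Hence $\psi(\cdot,y)\in\mathcal{H}$, and for any $f\in\mathcal{H}$ the same substitution yields
\[
\langle f,\psi(\cdot,y)\rangle_\mathcal{H}=\sum_{n=1}^\infty(f,\varphi_n)_{L_2(\M)}\varphi_n(y)=f(y),
\]
the last equality being the pointwise eigenfunction expansion of $f$ at $y$. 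Thus $\mathcal{H}$ is an RKHS with reproducing kernel $\psi$.

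To match this with $\mathcal{N}_\psi$, observe that the pre-Hilbert subspace $F_\psi$ of finite shifts sits inside $\mathcal{H}$, and on $F_\psi$ both the native inner product and $\langle\cdot,\cdot\rangle_\mathcal{H}$ collapse to $\sum_{j,k}c_jd_k\psi(x_j,y_k)$ via the reproducing property, so $\mathcal{N}_\psi\hookrightarrow\mathcal{H}$ isometrically. Conversely, since $\mathcal{H}$ is itself an RKHS with reproducing kernel $\psi$, uniqueness of the associated RKHS forces $\mathcal{H}\subseteq\mathcal{N}_\psi$, completing the proof. The main technical obstacle I anticipate is justifying pointwise evaluation on $\mathcal{H}$, whose elements are a priori only $L_2$-equivalence classes: one selects the canonical continuous representative $\sum_n(f,\varphi_n)_{L_2(\M)}\varphi_n$, and Cauchy--Schwarz combined with the uniform Mercer estimate $\sum_{n\geq N}\lambda_n\varphi_n(y)^2\to 0$ shows that the partial sums converge uniformly on $\M$, which both legitimizes the reproducing-property computation and gives the bound $|f(y)|\leq\|f\|_\mathcal{H}\sqrt{\psi(y,y)}$ certifying point evaluation as a bounded functional on $\mathcal{H}$.
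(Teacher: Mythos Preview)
Your argument is correct. The paper does not supply its own proof of this proposition; it simply records the statement and defers to Schaback's \emph{Native Spaces II} paper, so there is nothing to compare your approach against beyond noting that the Mercer-expansion route you take is precisely the machinery the paper invokes from that reference. Your direct appeal to Moore--Aronszajn uniqueness, together with the explicit computation $(\psi(\cdot,y),\varphi_n)_{L_2(\M)}=\lambda_n\varphi_n(y)$ and the Mercer diagonal bound, gives a clean self-contained proof; the care you take in selecting the continuous representative via the uniformly convergent tail estimate $\sum_{n\geq N}\lambda_n\varphi_n(y)^2\to 0$ is exactly the point that needs attention and you have handled it properly.
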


Along with the integral operator $T$ comes pseudodifferential operators $T^{-r}$, $r>0$, defined formally by
\[T^{-r}f(x) := \sum_{n=1}^{\infty}\frac{1}{\lambda_n^r} (f,\varphi_n)_{L_2(\M)}\varphi_n(x).\]
The above proposition tells us that a function $f$ resides in the native space if and only if $T^{-1/2}f\in L_2(\M)$. Thus we expect functions such that $T^{-1}f\in L_{2}(\M)$ to be at least twice as smooth. Below we show that these smoother functions enjoy faster convergence rates.

\begin{corollary}\label{theorem: doubling trick}
Let $\psi$ and $s$ be as in Theorem \ref{theorem: native error estimate}, and let $f\in \mathcal{N}_{\psi}$ be such that $T^{-1}f\in L_2(\M)$. Then we have
\[\|f - I_X f\|_{L_2(\M)}\leq C h_{X,\M}^{2s} \|T^{-1}f\|_{L_2(\M)}.\]
\end{corollary}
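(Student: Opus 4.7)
The plan is to run a duality argument that exploits the extra smoothness hypothesis $T^{-1}f\in L_2(\M)$ to trade a factor of $\|e\|_{L_2}$ for a factor of $\|e\|_{\mathcal{N}_\psi}$. Setting $e:=f-I_Xf$ and $g:=T^{-1}f$, so that $f=Tg$ with $g\in L_2(\M)$, the pivotal identity I would aim to prove is
\[
\|e\|_{\mathcal{N}_\psi}^2 \;=\; (g,e)_{L_2(\M)}.
\]
Once this is established, a single application of Cauchy--Schwarz together with the sampling estimate embedded in the proof of Theorem \ref{theorem: native error estimate} will collapse everything down to the claimed $h_{X,\M}^{2s}$ rate.

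To prove the identity, I would work through the Mercer expansion supplied by the Proposition just above the corollary. Writing $\hat h_n:=(h,\varphi_n)_{L_2(\M)}$, the assumption $f=Tg$ forces $\hat f_n=\lambda_n\hat g_n$, so the stated formula for the native-space inner product yields, for every $v\in\mathcal{N}_\psi$,
\[
\langle f,v\rangle_{\mathcal{N}_\psi} \;=\; \sum_{n=1}^{\infty}\frac{1}{\lambda_n}\hat f_n\hat v_n \;=\; \sum_{n=1}^{\infty}\hat g_n\hat v_n \;=\; (g,v)_{L_2(\M)}.
\]
Choosing $v=e$ and using that $I_Xf$ is the orthogonal projection of $f$ onto $\text{span}\{\psi(\cdot,x_j)\}_{j=1}^{N}$ in $\mathcal{N}_\psi$ -- so $\langle I_Xf,e\rangle_{\mathcal{N}_\psi}=0$ and hence $\langle f,e\rangle_{\mathcal{N}_\psi}=\|e\|_{\mathcal{N}_\psi}^2$ -- delivers the identity.

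The endgame is short. Cauchy--Schwarz applied to the identity gives $\|e\|_{\mathcal{N}_\psi}^2\le\|g\|_{L_2(\M)}\|e\|_{L_2(\M)}$. Separately, since $e$ vanishes on $X$ and $\mathcal{N}_\psi$ is norm-equivalent to $H^s(\M)$ by Theorem \ref{theorem:restrictednative}, applying Proposition \ref{prop:sobolev_zero_est} chart-by-chart with $\mu=0$, $p=q=2$, $n=k$ (exactly as in the proof of Theorem \ref{theorem: native error estimate}) produces
\[
\|e\|_{L_2(\M)} \;\le\; Ch_{X,\M}^{s}\|e\|_{\mathcal{N}_\psi}.
\]
Squaring this and inserting the Cauchy--Schwarz bound gives $\|e\|_{L_2(\M)}^2\le C^2 h_{X,\M}^{2s}\|g\|_{L_2(\M)}\|e\|_{L_2(\M)}$, so dividing through by $\|e\|_{L_2(\M)}$ (and treating the case $\|e\|_{L_2(\M)}=0$ trivially) yields the corollary. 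The main obstacle I anticipate is the opening identity: it rests on the Fourier-coefficient relation $\hat f_n=\lambda_n\hat g_n$, which requires exactly the hypothesis $T^{-1}f\in L_2(\M)$. Without that assumption the $L_2$--$\mathcal{N}_\psi$ duality collapses, which is why the doubling phenomenon only emerges under this extra smoothness constraint.
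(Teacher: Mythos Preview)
Your argument is correct and coincides with the paper's proof: both use the orthogonality $\langle I_Xf,\,f-I_Xf\rangle_{\mathcal{N}_\psi}=0$ to write $\|f-I_Xf\|_{\mathcal{N}_\psi}^2=\langle f,\,f-I_Xf\rangle_{\mathcal{N}_\psi}$, expand via the Mercer series and apply Cauchy--Schwarz to bound this by $\|T^{-1}f\|_{L_2(\M)}\|f-I_Xf\|_{L_2(\M)}$, and then combine with the zeros estimate $\|f-I_Xf\|_{L_2(\M)}\le Ch_{X,\M}^s\|f-I_Xf\|_{\mathcal{N}_\psi}$. The only cosmetic difference is that you first recognize $\langle f,v\rangle_{\mathcal{N}_\psi}=(T^{-1}f,v)_{L_2(\M)}$ as an identity before applying Cauchy--Schwarz, whereas the paper applies Cauchy--Schwarz directly to the series; the content is identical.
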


\begin{proof}
First, the arguments in Theorem \ref{theorem: native error estimate} give us that
\begin{equation}\label{eq: double zeros}
\|f - I_X f\|^2_{L_2(\M)}\leq C h_{X,\M}^{2s}\|f - I_X f\|_{\mathcal{N}_{\psi}}^2.
\end{equation}
Recall that the error $g:=f - I_X f$ and $I_Xf$ are orthogonal in $\mathcal{N}_{\psi}$. Using this, the above proposition, and a Cauchy-Schwartz inequality gives us
\begin{eqnarray*}
\|f - I_X f\|^2_{\mathcal{N}_{\psi}} & = & \left\langle g, f\right\rangle_{\mathcal{N}_\psi} = \sum_{n=1}^{\infty} \frac{1}{\lambda_n} (f,\varphi_n)_{L_2(\M)}(g,\varphi_n)_{L_2(\M)} \\
& \leq & \left(\sum_{n=1}^{\infty} \frac{1}{\lambda_n^2} (f,\varphi_n)_{L_2(\M)}^2\right)^{1/2}\left(\sum_{n=1}^{\infty}(g,\varphi_n)_{L_2(\M)}^2\right)^{1/2} \\
& = & \|T^{-1}f\|_{L_2(\M)}\|g\|_{L_2(\M)}.
\end{eqnarray*}
This along with (\ref{eq: double zeros}) finishes the proof.
\end{proof}


Now we shift our attention to functions less smooth than those in the native space. Finding error estimates for functions outside the native space, sometimes called ``escaping'' the native space, has only recently been made possible through the use of ``band-limited'' functions, which are functions with compactly supported Fourier transforms. Estimates for the escape when using radial basis functions restricted to the sphere were given in \cite{NarcSunWard:2007direct}, where the authors used tools intrinsic to the sphere. To deal with more general manifolds, we will lift the problem from the manifold to $\R^d$.

The results that lead to the escape can be quite deep; to be brief we merely list the properties of the band-limited functions needed for our proof. For the interested reader, more details and a complete reference list can be found in the survey paper \cite{Narc2005:RecentErrorSurvey}. The following are consequences of Theorem 3.4 in \cite{NarcWardWend:2006bernstein} and the remarks thereafter.

\begin{proposition}\label{prop: band limited interpolant}
Let $\nu > d/2$, and let $X$ be a finite subset of $\R^d$. If $f\in H^{\nu}(\R^d)$ then there is an $f_\sigma:\R^d\rightarrow \R$ such that
\begin{enumerate}
\item $f_{\sigma}|_X = f|_X$,
\item The Fourier transform of $f_\sigma$ is supported within $B(0,\sigma)$ with $\sigma = C/q_{X,\R^d}$,
\item $\|f - f_\sigma\|_{H^{\nu}(\R^d)}\leq C \|f\|_{H^{\nu}(\R^d)}$,
\item $\|f_{\sigma}\| \leq C \|f\|_{H^{\nu}(\R^d)}$,
\end{enumerate}
where each $C$ represents a constant independent of $X$ and $f$. 
\end{proposition}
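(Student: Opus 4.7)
The plan is to construct $f_\sigma$ as a band-limited interpolant to $f$ on the node set $X$, mirroring the construction in \cite{NarcWardWend:2006bernstein}. First I would select a positive definite kernel $\Phi_\sigma$ on $\R^d$ whose Fourier transform is supported in $B(0,\sigma)$ and is pointwise comparable to $(1+\|\xi\|_2^2)^{-\nu}$ on that ball; a convenient concrete choice is $\widehat{\Phi}_\sigma(\xi) = \chi(\xi/\sigma)\,(1+\|\xi\|_2^2)^{-\nu}$ with $\chi$ a nonnegative radial bump supported in the unit ball. The native space of $\Phi_\sigma$ consists of band-limited functions with bandwidth at most $\sigma$, and its norm is equivalent to the $H^\nu$-norm on that subspace.

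Next, with the bandwidth $\sigma$ coupled to the separation radius via $\sigma = C_0/q_{X,\R^d}$ for an appropriately chosen $C_0$, I would invoke the Narcowich--Ward stability estimate for the Gram matrix $A_{ij} = \Phi_\sigma(x_i - x_j)$. This estimate gives a quantitative lower bound on the smallest eigenvalue of $A$ depending only on $q_{X,\R^d}$ and $\nu$, so the interpolation system is invertible and one may define
\[
f_\sigma := \sum_{j=1}^N c_j\,\Phi_\sigma(\cdot - x_j), \qquad A c = f|_X,
\]
which gives property (1) at once. Property (2) is immediate, because $\widehat{f}_\sigma$ is a finite linear combination of translates of $\widehat{\Phi}_\sigma$, each supported in $B(0,\sigma)$.

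To obtain properties (3) and (4), I would use the reproducing-kernel variational characterization: $f_\sigma$ is the unique minimizer of the $\Phi_\sigma$-native-space norm over all $g$ with $g|_X = f|_X$. Applying this minimization property to a competitor built from $f$ itself (via a band-limited modification or via the fact that $f|_X$ extends to a function in the native space of $\Phi_\sigma$ with norm controlled by $\|f\|_{H^\nu(\R^d)}$) gives $\|f_\sigma\|_{\mathcal{N}_{\Phi_\sigma}} \leq C\|f\|_{H^\nu(\R^d)}$, and then the equivalence of the native norm with the $H^\nu$-norm on band-limited functions yields (4). Property (3) then follows from the triangle inequality $\|f-f_\sigma\|_{H^\nu} \leq \|f\|_{H^\nu} + \|f_\sigma\|_{H^\nu}$ together with (4).

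The main obstacle is the stability step: one must establish a lower bound on the smallest eigenvalue of the Gram matrix that depends \emph{only} on the separation radius $q_{X,\R^d}$, not on $h_{X,\R^d}$ or the cardinality of $X$, even as $\sigma \to \infty$ with $q_{X,\R^d}\to 0$. This is the quantitative Narcowich--Ward lemma in \cite{NarcWardWend:2006bernstein}; its proof proceeds by constructing, for any candidate coefficient vector, a well-chosen test function supported in a ball of radius proportional to $q_{X,\R^d}$ around one of the nodes, and evaluating the quadratic form against it via Plancherel. Once this eigenvalue bound is in hand, the remaining steps are routine manipulations with the Fourier transform and the reproducing-kernel identity.
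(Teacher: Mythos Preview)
The paper does not give a proof of this proposition; it is stated as a direct consequence of \cite[Theorem~3.4]{NarcWardWend:2006bernstein} and the remarks following it, and is quoted without argument. So there is nothing in the paper itself to compare against beyond the citation.

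Your sketch is broadly aligned with the construction in that reference: one does build $f_\sigma$ as the interpolant to $f|_X$ from the span of translates of a band-limited kernel $\Phi_\sigma$ whose Fourier support is $B(0,\sigma)$ with $\sigma\sim q_{X,\R^d}^{-1}$, and properties (1) and (2) drop out immediately. The stability of the Gram matrix in terms of $q_{X,\R^d}$ alone is indeed the technical core, and you have correctly identified it.

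There is, however, a genuine gap in your argument for (4). You invoke the minimizing property of $f_\sigma$ in the native space of $\Phi_\sigma$ and propose to compare it against ``a competitor built from $f$ itself (via a band-limited modification \ldots)''. But any competitor must simultaneously (i) lie in $\mathcal{N}_{\Phi_\sigma}$, i.e.\ be band-limited to $B(0,\sigma)$, (ii) interpolate $f$ on $X$, and (iii) have $H^\nu$-norm controlled by $\|f\|_{H^\nu}$. Producing such a function is precisely the content of the proposition you are trying to prove, so the argument as written is circular. A simple Fourier cutoff of $f$ gives (i) and (iii) but not (ii); the interpolant itself gives (i) and (ii) but (iii) is what is at stake. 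The route taken in \cite{NarcWardWend:2006bernstein} avoids this circularity: one uses the eigenvalue lower bound on the Gram matrix together with a carefully matched \emph{upper} bound on a suitable norm of the data vector $f|_X$ (obtained from a sampling inequality for $H^\nu$ functions in terms of $q_{X,\R^d}$), and combines these via the identity $\|f_\sigma\|_{\mathcal{N}_{\Phi_\sigma}}^2 = c^T A c = (f|_X)^T A^{-1} (f|_X)$. That pairing of bounds is what makes the constants independent of $X$, and it is the step your outline is missing.
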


As the reader will see below, there is apparently a penalty paid for approximating a rough function, given by the introduction of so-called \emph{mesh ratio}, which we denote by $\rho_{X,\M} := h_{X,\M}/q_{X,\M}$.

\begin{theorem}\label{theorem: escape error estimate}
Let $\M$, $\psi$, $s$ and $q$ be as above. Let $\beta$ be such that $s > \beta > k/2$, and suppose $\mu$ is an integer satisfying $\lfloor \beta \rfloor > \mu + k/2$. Then there is a constant $h_\M$ such that if a finite node set $X\subset \M$ satisfies $h_{X,\M}\leq h_\M$, then for all $f\in H^{\beta}(\M)$ we have
\[
\|f - I_{X,\psi} f\|_{W^{\mu}_{q}(\M)}\leq C h_{X,\M}^{\beta-\mu-k(1/2-1/q)_{+}}\rho_{X,\M}^{s - \beta}\|f\|_{H^{\beta}(\M)}.
\]
\end{theorem}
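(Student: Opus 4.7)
The plan is to adapt the band-limited interpolant technique from \cite{NarcWardWend:2006bernstein} to the manifold setting. Since $f\in H^{\beta}(\M)$ with $\beta<s$ lies below the native space $\mathcal{N}_{\psi}=H^{s}(\M)$, Theorem \ref{theorem: native error estimate} cannot be applied to $f$ directly. The idea is to manufacture an auxiliary function $f_{\sigma}$ that interpolates $f$ on $X$, lies inside $\mathcal{N}_{\psi}$, and whose $H^{s}(\M)$ norm is controlled by a negative power of the separation radius. Because the only construction of such band-limited interpolants we have (Proposition \ref{prop: band limited interpolant}) is stated on $\R^{d}$, the first step is to lift the problem off of $\M$ and into the ambient space.

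First I would set $\nu:=\beta+(d-k)/2$ and apply Proposition \ref{proposition:trace} to obtain an extension $F:=E_{\M}f\in H^{\nu}(\R^{d})$ with $\|F\|_{H^{\nu}(\R^{d})}\leq C\|f\|_{H^{\beta}(\M)}$. The hypothesis $\beta>k/2$ is precisely what guarantees $\nu>d/2$, which is the threshold required to apply Proposition \ref{prop: band limited interpolant}. That proposition furnishes $F_{\sigma}$ with $F_{\sigma}|_{X}=f|_{X}$, Fourier support in $B(0,\sigma)$ for $\sigma=C/q_{X,\R^{d}}$, together with the control $\|F-F_{\sigma}\|_{H^{\nu}(\R^{d})}+\|F_{\sigma}\|_{H^{\nu}(\R^{d})}\leq C\|f\|_{H^{\beta}(\M)}$. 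Setting $f_{\sigma}:=F_{\sigma}|_{\M}$ and exploiting band-limitedness, the elementary Bernstein-type estimate $\|F_{\sigma}\|_{H^{\tau}(\R^{d})}\leq C\sigma^{\tau-\nu}\|F_{\sigma}\|_{H^{\nu}(\R^{d})}$ (valid once $h_{\M}$ is small enough that $\sigma\geq 1$) combined with the trace inequality from Proposition \ref{proposition:trace} yields $\|f_{\sigma}\|_{H^{s}(\M)}\leq C\sigma^{s-\beta}\|f\|_{H^{\beta}(\M)}$, since $\tau-\nu=s-\beta$.

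Since $f_{\sigma}$ and $f$ agree on $X$ we have $I_{X,\psi}f=I_{X,\psi}f_{\sigma}$, so I would split the error as
\[
\|f-I_{X,\psi}f\|_{W^{\mu}_{q}(\M)}\leq\|f-f_{\sigma}\|_{W^{\mu}_{q}(\M)}+\|f_{\sigma}-I_{X,\psi}f_{\sigma}\|_{W^{\mu}_{q}(\M)}.
\]
The first term is handled by the same chart-by-chart ``many zeros'' argument used in the proof of Theorem \ref{theorem: native error estimate}: $f-f_{\sigma}$ vanishes on $X$ and lies in $H^{\beta}(\M)$, so Proposition \ref{prop:sobolev_zero_est} together with Theorem \ref{theroem: equivalent fill distance} gives $\|f-f_{\sigma}\|_{W^{\mu}_{q}(\M)}\leq Ch_{X,\M}^{\beta-\mu-k(1/2-1/q)_{+}}\|f\|_{H^{\beta}(\M)}$. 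The second term is handled by Theorem \ref{theorem: native error estimate} itself (since $f_{\sigma}\in H^{s}(\M)=\mathcal{N}_{\psi}$) combined with the bound on $\|f_{\sigma}\|_{H^{s}(\M)}$ above, yielding $Ch_{X,\M}^{s-\mu-k(1/2-1/q)_{+}}\sigma^{s-\beta}\|f\|_{H^{\beta}(\M)}$. Finally Theorem \ref{theorem:equivalent separation radius} lets me replace $q_{X,\R^{d}}$ by $q_{X,\M}$ in $\sigma$, and regrouping $h_{X,\M}^{s-\beta}\sigma^{s-\beta}=\rho_{X,\M}^{s-\beta}$ delivers the stated estimate (the first term is absorbed using that $\rho_{X,\M}$ is bounded below).

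The hardest part is the bookkeeping of the smoothness exchange under trace, extension, and band-limiting: each step adds or removes $(d-k)/2$ units between the manifold and ambient scales, and the Bernstein factor $\sigma^{s-\beta}$ must come out exactly matching the desired exponent in $q_{X,\M}$. Verifying the sharp relationship between $\beta>k/2$ (the smoothness assumption on the target) and $\nu>d/2$ (the threshold needed by the band-limited construction on $\R^{d}$) is the linchpin that makes the whole lifting argument work.
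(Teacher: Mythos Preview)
Your proposal is correct and uses the same ingredients as the paper: lift $f$ to $H^{\nu}(\R^d)$ via $E_{\M}$ with $\nu=\beta+(d-k)/2$, build the band-limited interpolant $F_{\sigma}$ from Proposition~\ref{prop: band limited interpolant}, restrict back via the trace, invoke a Bernstein inequality to gain the factor $q_{X,\M}^{\beta-s}$, and close with the many-zeros bound and Theorem~\ref{theorem:equivalent separation radius}.

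The only difference is the point at which the triangle inequality is applied. The paper first uses the chart-wise zeros argument on $f-I_{X,\psi}f$ to pass from $W^{\mu}_{q}(\M)$ down to $H^{\beta}(\M)$, and \emph{then} splits $\|f-I_{X,\psi}f\|_{H^{\beta}(\M)}$ into $\|f-T_{\M}f_{\sigma}\|_{H^{\beta}(\M)}+\|T_{\M}f_{\sigma}-I_{X,\psi}T_{\M}f_{\sigma}\|_{H^{\beta}(\M)}$; you split directly in $W^{\mu}_{q}(\M)$ and handle each piece separately. Your ordering is arguably cleaner, because the second piece is then exactly an instance of Theorem~\ref{theorem: native error estimate} applied to $f_{\sigma}\in H^{s}(\M)$ with integer output index $\mu$, whereas the paper's route requires bounding $\|T_{\M}f_{\sigma}-I_{X,\psi}T_{\M}f_{\sigma}\|_{H^{\beta}(\M)}$ with the (possibly non-integer) output index $\beta$. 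Either way the arithmetic $h_{X,\M}^{s-\beta}q_{X,\M}^{\beta-s}=\rho_{X,\M}^{s-\beta}$ and the absorption of the $f-f_{\sigma}$ term via $\rho_{X,\M}\geq 1$ are identical.
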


\begin{proof}
We choose $h_\M$ as before. If $f \in  H^{\beta}(\M)$, the Trace theorem lets us continuously extend $f$ to $H^{\nu}(\R^d)$ via the map $E_{\M}$, where $\nu = \beta + (d - k)/2$. Since $\beta > k/2$ we have $\nu > d/2$, thus allowing us to find a band-limited interpolant $f_{\sigma}$ to $E_{\M}f$ with the approximation properties listed in Proposition \ref{prop: band limited interpolant}.  

To get the appropriate orders of the mesh norm, we again use the many zeros result as in the proof of Theorem \ref{theorem: native error estimate} to get
\[\|f - I_{X,\psi} f\|_{W^{\mu}_{q}(\M)}\leq C h_{X,\M}^{\beta-\mu-k(1/2-1/q)_{+}}\|f - I_{X,\psi}f\|_{H^{\beta}(\M)}.\]
The rest of the proof will follow after bounding $\|f - I_{X,\psi}f\|_{H^{\beta}(\M)}$. Note that since $T_{\M}f_{\sigma}|_X = f|_X$, we have
\[I_{X,\psi}f = I_{X,\psi}T_{\M}f_{\sigma} \]
Using this and the fact that all functions involved can be considered traces of function on $\R^d$ gives us
\begin{eqnarray*}
\|f - I_{X,\psi}f\|_{H^{\beta}(\M)} & \leq  &\|f - T_{\M}f_{\sigma}\|_{H^{\beta}(\M)} + \|T_{\M}f_{\sigma} - I_{X,\psi}f\|_{H^{\beta}(\M)} \\ 
& =  &\|T_{\M}E_{\M}f - T_{\M}f_{\sigma}\|_{H^{\beta}(\M)} + \|T_{\M}f_{\sigma} - I_{X,\psi}T_{\M}f_{\sigma}\|_{H^{\beta}(\M)} \\ 
 & \leq &C \|E_{\M}f - f_{\sigma}\|_{H^{\nu}(\R^d)} + \|T_{\M}f_{\sigma} - I_{X,\psi}T_{\M}f_{\sigma}\|_{H^{\beta}(\M)}.
\end{eqnarray*}
First we concentrate on the leftmost term on the right-hand side. We can use Proposition \ref{prop: band limited interpolant} and the fact that $f$ was continuously extended to get 
\[\|E_{\M}f - f_{\sigma}\|_{H^{\nu}(\R^d)} \leq C \|E_{\M}f\|_{H^{\nu}(\R^d)} \leq C \|f\|_{H^{\beta}(\M)}. \]
With bounding the other term in mind, note that since $f_{\sigma}$ is bandlimited, we have $f_{\sigma}\in \mathcal{N}_{\phi}$, so its restriction to $\M$ is in $\mathcal{N}_{\psi} = H^{s}(\M)$. Thus we can apply Theorem \ref{theorem: escape error estimate} to get
\[\|T_{\M}f_{\sigma} - I_{X,\psi}T_{\M}f_{\sigma}\|_{H^{\beta}(\M)} \leq C  h_{X,\M}^{s-\beta}\|T_{\M}f_{\sigma}\|_{H^{s}(\M)}.\]
The Trace operator is continuous, so we have
\[\|T_{\M}f_{\sigma}\|_{H^{s}(\M)} \leq C \|f_{\sigma}\|_{H^{\tau}(\R^d)},\]
and the fact that $f_\sigma$ is band-limited with bandwidth $\sigma \sim 1/q_{X,\R^d}$ allows us to apply a Bernstein inequality, giving us
\begin{eqnarray*}
\|T_{\M}f_{\sigma}\|_{H^{s}(\M)} & \leq & C \|f_{\sigma}\|_{H^{\tau}(\R^d)} \leq C q_{X,\R^d}^{\nu - \tau} \|f_{\sigma}\|_{H^{\nu}(\R^d)} \\
& \leq & C q_{X,\M}^{\nu - \tau} \|f_{\sigma}\|_{H^{\nu}(\R^d)} =  C q_{X,\M}^{\beta - s} \|f_{\sigma}\|_{H^{\nu}(\R^d)},
\end{eqnarray*}
where in the second to last inequality we have invoked Theorem \ref{theorem:equivalent separation radius}. Continuing with the estimate, Proposition \ref{prop: band limited interpolant} and the fact that $f$ was continuously extended from $\M$ to $\R^d$ gives us
\[ \|f_{\sigma}\|_{H^{\nu}(\R^d)} \leq C  \|E_{\M}f\|_{H^{\nu}(\R^d)} \leq  C  \|f\|_{H^{\beta}(\M)}.\] 
Stringing these inequalities together, we obtain
\begin{eqnarray*}
\|f - I_{X,\psi}f\|_{H^{\beta}(\M)} & \leq & C \|f\|_{H^{\beta}(\M)} +  C  h_{X,\M}^{s-\beta}\|T_{\M}f_{\sigma}\|_{H^{s}(\M)} \\
& \leq & C \|f\|_{H^{\beta}(\M)} +  C  h_{X,\M}^{s-\beta} q_{X,\M}^{\beta - s}\|f\|_{H^{\beta}(\M)} \\
& \leq & C  \rho_{X,\M}^{s - \beta}\|f\|_{H^{\beta}(\M)}.
\end{eqnarray*}
This completes the proof.
\end{proof}

\noindent If the nodes are chosen in a non-uniform way, we see that the error bound above might be quite large. However, if the node sets one is dealing with are more or less uniform, $\rho_{X,\M}$ can be bounded by a constant, and one would obtain the typical approximation rates for target functions of a certain smoothness.

\section{Numerical results}
We provide numerical results verifying Theorems \ref{theorem: native error estimate} and \ref{theorem: escape error estimate} for target functions inside and outside of the native space, respectively. Two different compact embedded smooth submanifolds in $\R^3$ are considered. The first is a one-dimensional submanifold with the parametric representation
\begin{equation}
\M_1 = \left\{(u,v,w) \in \R^3 \;\left| \;u = \lp 1 + \frac13\cos 6\theta\rp\cos\theta,\; v = \lp 1 + \frac13\cos 6\theta\rp\sin\theta,\; w = \frac13\sin 2\theta,\; 0\leq \theta < 2\pi \right. \right\}.
\label{eq:Curve}
\end{equation}
This curve is displayed in Figure \ref{fig:Curve}.  The second submanifold is a two-dimensional torus with parametric representation
\begin{equation}
\M_2 = \left\{(u,v,w) \in \R^3 \;\left| \;u = \lp 1 + \frac13\cos\lambda\rp\cos\theta,\; v = \lp1 + \frac13\cos\lambda\rp\sin\theta,\; w = \frac13\sin\lambda,\; 0\leq \theta,\lambda < 2\pi \right. \right\}.
\label{eq:Torus}
\end{equation}
This torus is displayed in Figure \ref{fig:Torus}.

\begin{figure}[thb]
\centering
\begin{tabular}{cc}
\subfigure[]{ \includegraphics[width=0.48\textwidth]{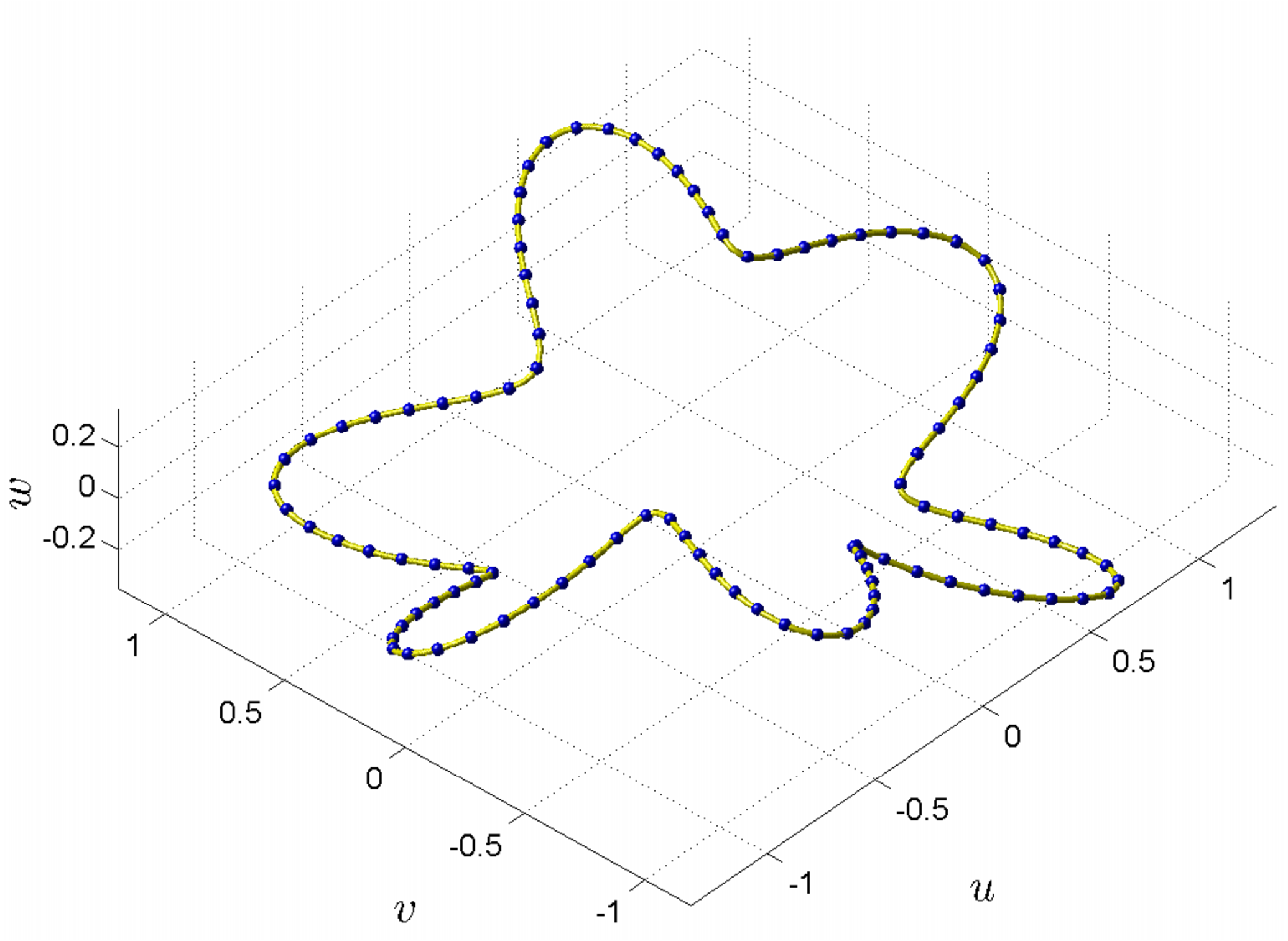} \label{fig:Curve}} &
\subfigure[]{ \includegraphics[width=0.48\textwidth]{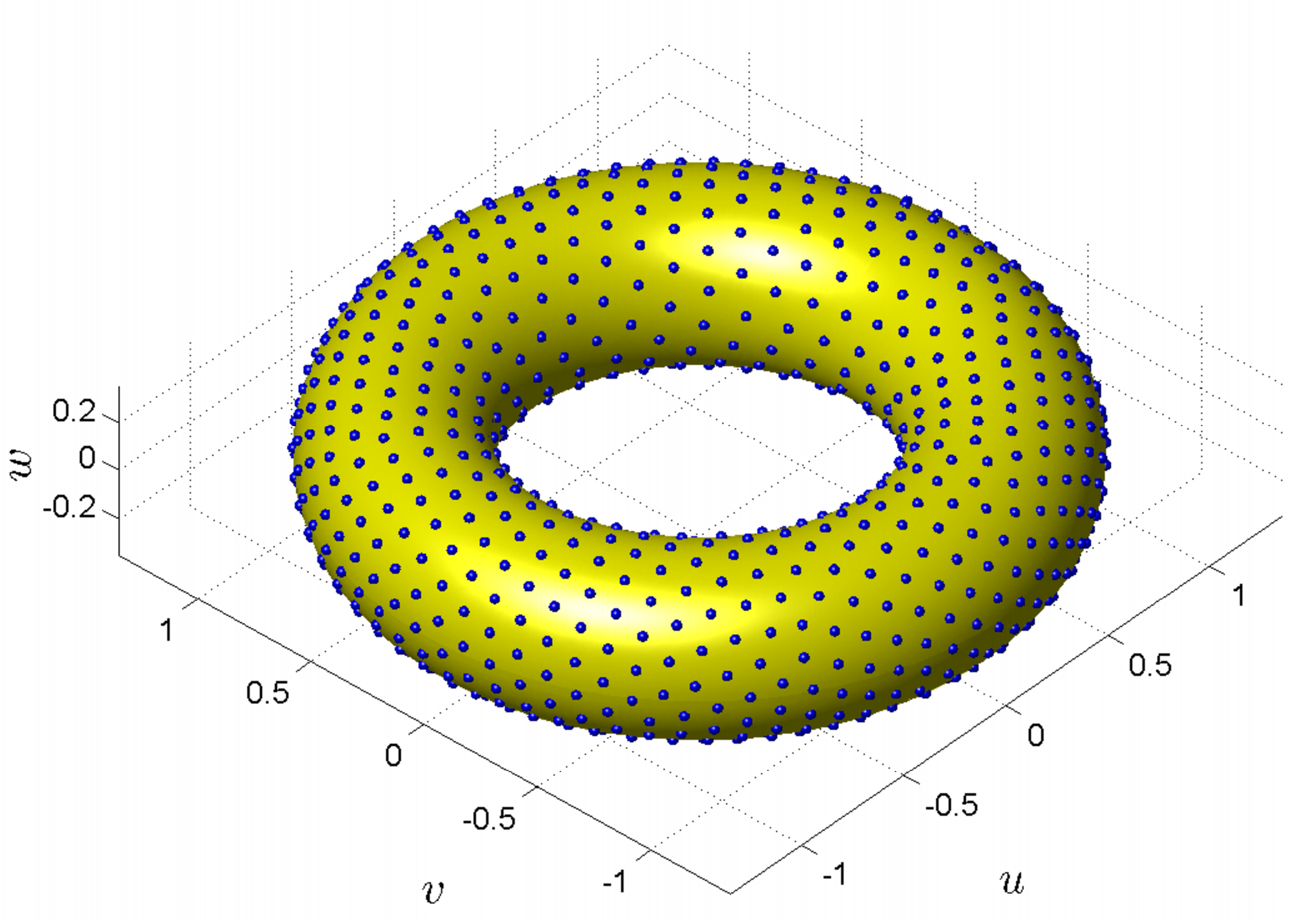} \label{fig:Torus}}
\end{tabular}
\caption{Compact embedded smooth submanifolds used for the numerical experiments.  (a) One-dimensional curve $\M_1$ described parametrically by \eqref{eq:Curve}. (b) Two-dimensional torus $\M_2$  described parametrically by \eqref{eq:Torus}.  Solid spheres mark the locations of the near-minimum Reisz energy interpolation nodes for (a) $N=100$ and (b) $N=1000$.\label{manifolds}}
\end{figure}

To discretize $\M_1$ and $\M_2$, we use a hierarchy of node sets with increasing cardinalities.  For $\M_1$ we use cardinalities of $N=50$, $100$, $200$, $300$, $400$, and $500$, while for $\M_2$ we use $N=500$, $750$, $1000$, $2000$, $3000$, and $4000$.  The node sets for both manifolds are obtained by arranging the nodes so that their Reisz energy (with a power of 2) is near minimal as described in Hardin and Saff's seminal article~\cite{HarinSaff04}.  For $\M_1$, this results in node sets with mesh norms $h_{X,\M_1}$ that decrease like $1/N$, while the mesh norms $h_{X,\M_2}$ for $\M_2$ decrease like $1/\sqrt{N}$.  Additionally, the mesh ratios $\rho_{X,\M_1}$ and $\rho_{X,\M_2}$ remain roughly constant.  The small solid spheres on the curve in Figure \ref{fig:Curve} display the node locations for the $N=100$ node set, while the small spheres on the torus in Figure \ref{fig:Torus} display the $N=1000$ node set.  It is obvious from the latter plot that the nodes are not oriented along any vertices or lines emphasizing the ability of the proposed kernel interpolation technique to handle arbitrary node layouts on a submanifold.

The positive definite kernel we use for constructing the interpolants in the experiments is Wendland's compactly supported RBF
\begin{equation}
\phi_{3,2}(x,y) = \phi_{3,2}(\|x-y\|) = \phi_{3,2}(r) = 
\lp 1-\delr \rp_{+}^6 \lp 3 + 18\delr + 35\lp\delr\rp^2\rp,
\label{eq:Wendland32}
\end{equation}
which is positive definite in $\R^3$ and has 4 continuous derivatives~\cite[\S 9.4]{Wendland:2004}.  Furthermore, the native space of this kernel is known to be $H^{4}(\R^3)$~\cite[p.157]{Wendland:2004}, which means $\tau=4$ in Theorems \ref{theorem: native error estimate} and \ref{theorem: escape error estimate}.  According to Theorem \ref{theorem:restrictednative}, the native spaces on $\M_1$ and $\M_2$ for this kernel are thus $H^{3}(\M_1)$ and $H^{3.5}(\M_2)$. The free parameter $\delta$ is referred to as the support radius and its optimal value depends on numerous factors which are neither easy nor obvious to determine (cf. ~\cite[Ch. 15]{Wendland:2004} or~\cite[Ch. 5]{Iske:2004}).  Since our intention in the present study is only to provide verification of the error estimates presented above, we set $\delta=8/3$ (the maximum distance of any two nodes on either $\M_1$ or $\M_2$) for all the numerical results and leave investigations of selecting $\delta$ for interpolation on submanifolds to a separate study.

One difficulty with verifying error estimates of the type given in Theorems \ref{theorem: native error estimate} and \ref{theorem: escape error estimate} is coming up with explicit forms of target functions belonging to a desired  Sobolev space.  Fortunately, Sobolev spaces can be defined in terms of decay rates of Fourier transforms~\cite[p. 133]{Wendland:2004}.  Using this definition, a straightforward calculation shows any function $\kappa\in \R^d$ whose Fourier transform satisfies
\begin{equation}
\widehat{\kappa}(\xi) \sim (1 + \|\xi\|_{2}^{2})^{-\nu}
\label{eq:SobolevSpaceTarget}
\end{equation}
belongs to every Sobolev space $H^{\beta}(\R^d)$, with $\beta < 2\nu - d/2$.  Thus, we can use functions whose Fourier transforms satisfy (\ref{eq:SobolevSpaceTarget}) with $\nu = (\beta + d/2)/2$ as target functions since they will then be in $H^{s}(\M)$ for all $s < \beta - (d-k)/2$.  

A well-known class of functions that satisfy (\ref{eq:SobolevSpaceTarget}) (with strict equality) are the Mat\'{e}rn kernels~\cite{Matern:1986SpatialVariation} or Sobolev splines and are defined as
\begin{equation*}
\kappa_{\nu}(x,y) = \kappa_{\nu}(\|x - y\|) = \kappa_{\nu}(r) = \frac{2^{1-(\nu-d/2)}}{\Gamma(\nu-d/2)}r^{\nu-d/2}K_{\nu-d/2}(r),\; \nu > d/2
\end{equation*}
where $K_{\nu-d/2}$ corresponds to the $K$-Bessel function of order $\nu-d/2$.  To generate interesting target functions for verifying Theorems \ref{theorem: native error estimate} and \ref{theorem: escape error estimate}, we use linear combinations of these Mat\'{e}rn kernels as follows.  Let $X = \{x_1,x_2,\ldots,x_m\}$ be some set of distinct points on the submanifold under consideration (either $\M_1$ or $\M_2$), then the target function is given by
\begin{equation}
f_{\beta}(x) = \sum_{j=1}^{m} c_j \kappa_{\frac12(\beta + d/2)}(\|x - x_j\|).
\label{eq:TargetFunction}
\end{equation}
The coefficients $c_1,c_2,\ldots,c_m$ are determined by the requiring $f_\beta$ interpolate the following function at the points in $X$:
\begin{equation}
p(x) = \frac18\lp u^5 - 10u^3 v^2 + 5 u v^4\rp \lp u^2 + v^2 - 60 w^2 \rp,
\label{eq:SmoothTargetFunction}
\end{equation}
where $u$, $v$, and $w$ are the components of $x$. For $\M_1$, we use $m=25$ quasi-minimum Reisz energy points for $X$, while for $\M_2$ we use $n=100$ quasi-minimum Reisz energy points.  Plots of the target function for $\beta=4$ are displayed in Figure \ref{fig:TargetFunctions}(a) and (b) for $\M_1$ and $\M_2$, respectively.  Note that $f_{\beta}$ is \emph{not} the kernel interpolant to be compared against the theoretical error estimates, rather it is the form of the target functions to be interpolated.

\begin{figure}[t]
\centering
\begin{tabular}{cc}
\subfigure[]{ \includegraphics[width=0.42\textwidth]{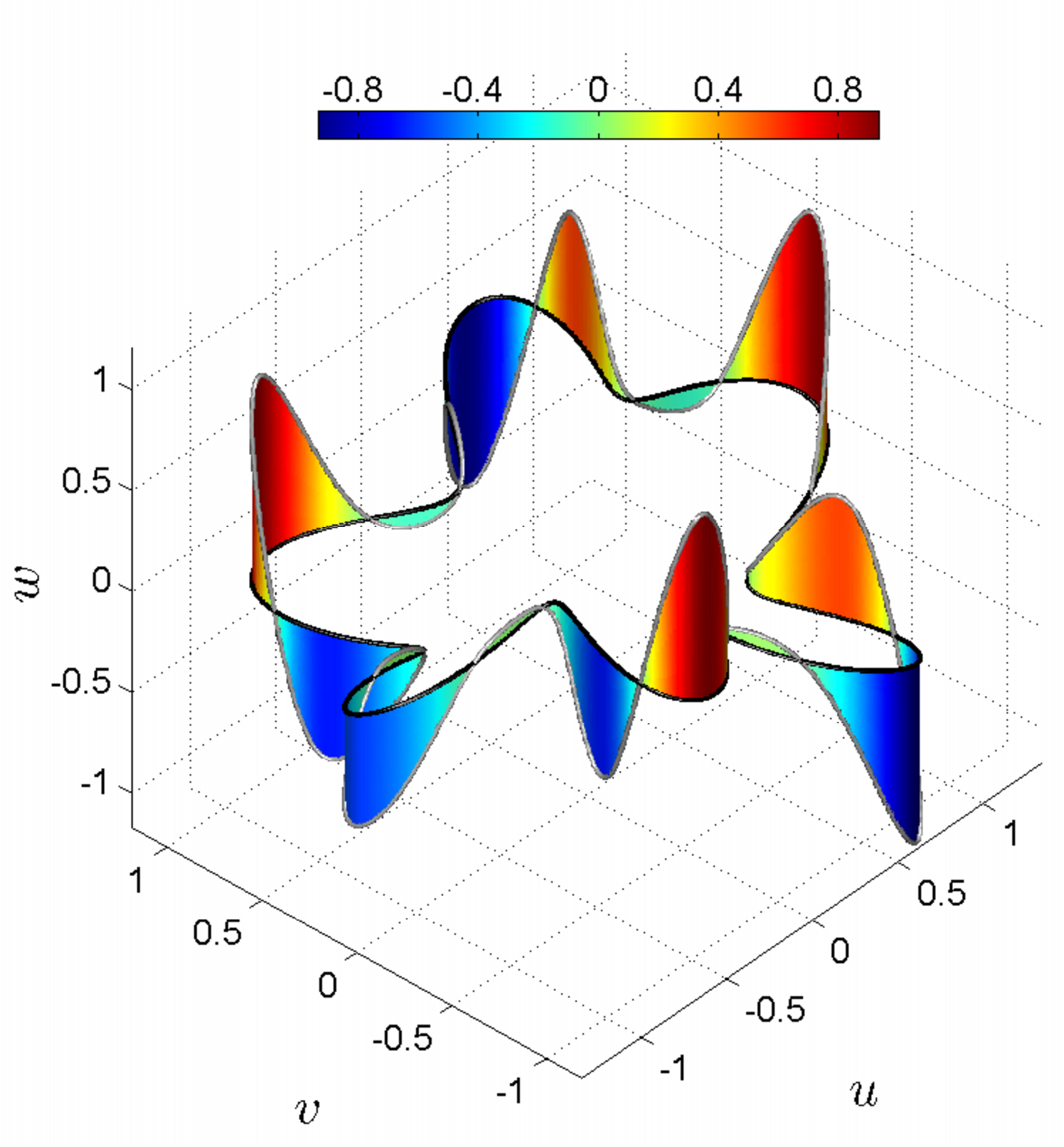} \label{fig:CurveTarget}} &
\subfigure[]{ \includegraphics[width=0.54\textwidth]{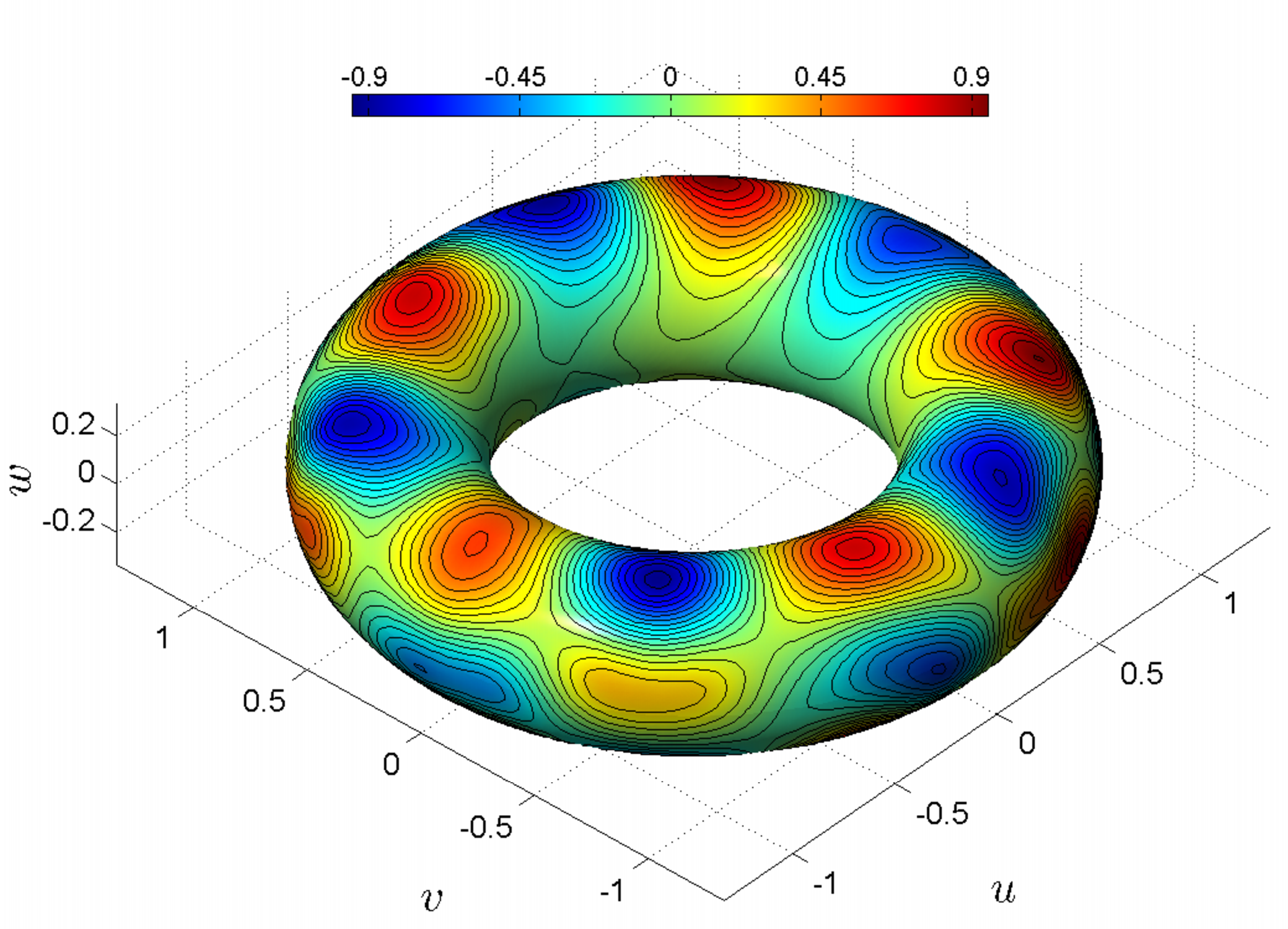} \label{fig:TorusTarget}}
\end{tabular}
\caption{Visualization of two of the target functions for the numerical experiments.  (a) The target function (\ref{eq:TargetFunction}) for $\M_1$ with $m=25$ and $\beta=4$ displayed as the height above the submanifold (i.e. $(u,v,w+f_{4}(x))$) together with colors indicating the values of the function. (b) The target function for $\M_2$ with $m=100$ and $\beta=4$ with colors corresponding to the values of the target function on the submanifold and black lines corresponding to contours of the function. \label{fig:TargetFunctions}}
\end{figure}

In the results below, the errors are measured by evaluating the kernel interpolants and the target functions at a much denser set of points that sufficiently cover the manifolds.  For $\M_1$, $P = 3000$ evaluation points are used, while $P=24,300$ points are used for $\M_2$.  We approximate the (relative) $L_2(\M)$-norms of the errors by approximating the surface integrals over the manifolds using a midpoint-type rule.  We use the following abuse of notation to denote the approximate $L_2(\M)$-norm:
\begin{equation*}
\|f\|_{L_2(\M)} := \lp\int_{\M} [f(x)]^{2} dx\rp^{1/2} \approx \lp\sum_{i=1}^P w_i [f(x_i)]^2\rp^{1/2} := \|f\|_{\ell_2(\M)},
\end{equation*}
where $\{w_i\}_{i=1}^{P}$ are quadrature weights for the evaluation points $\{x_i\}_{i=1}^{P}$ on the manifold.  We also meausure the max-norm errors over the manifolds.  We use the standard definition for this norm and denote it with the standard notation of $\ell_{\infty}(\M)$.

\begin{figure}[t!]
\centering
\begin{tabular}{cc}
\subfigure[$\M_1$, $\beta=4$]{ \includegraphics[width=0.45\textwidth]{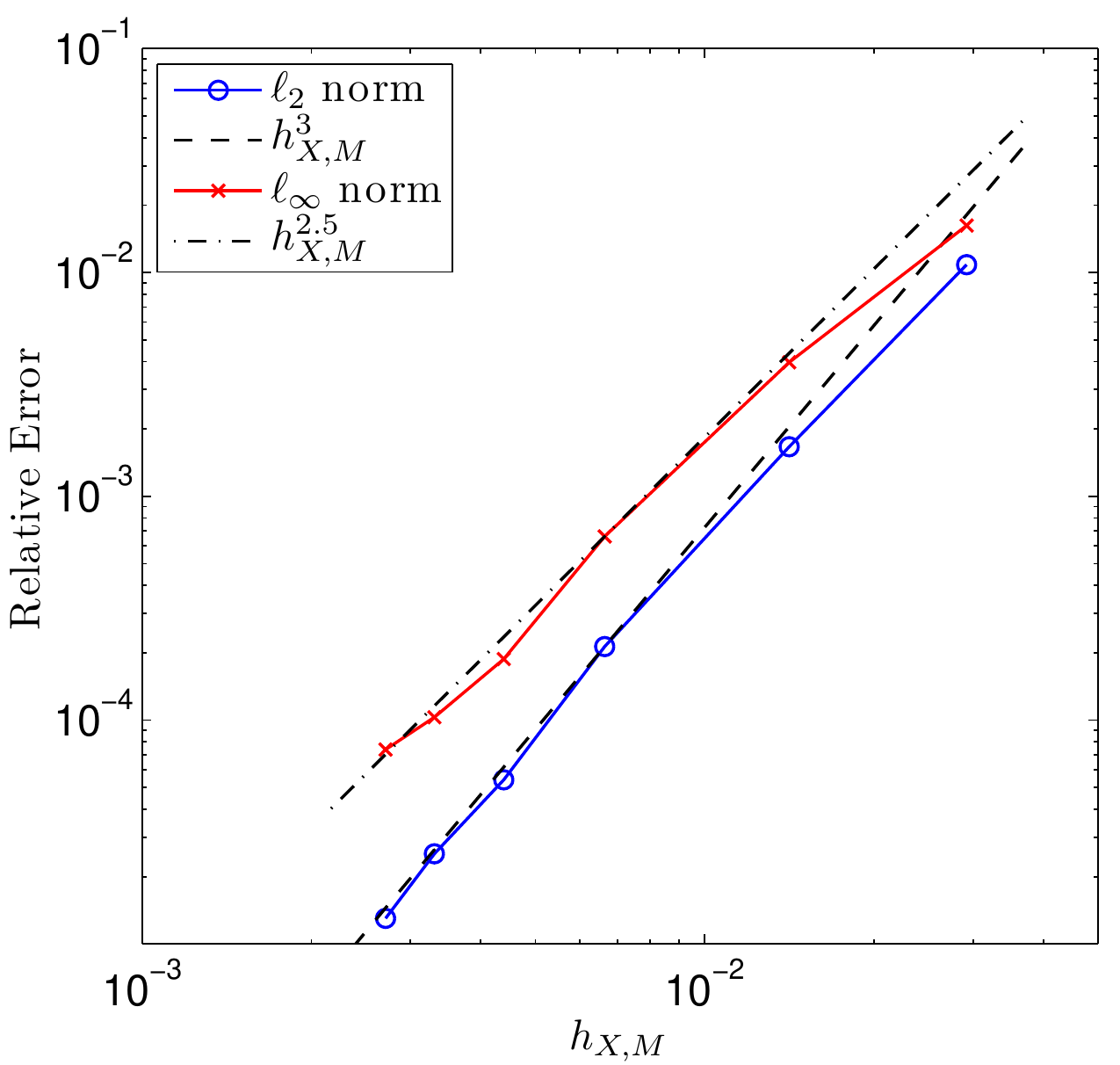} \label{fig:ErrorM1Beta5}} &
\subfigure[$\M_2$, $\beta=4$]{ \includegraphics[width=0.45\textwidth]{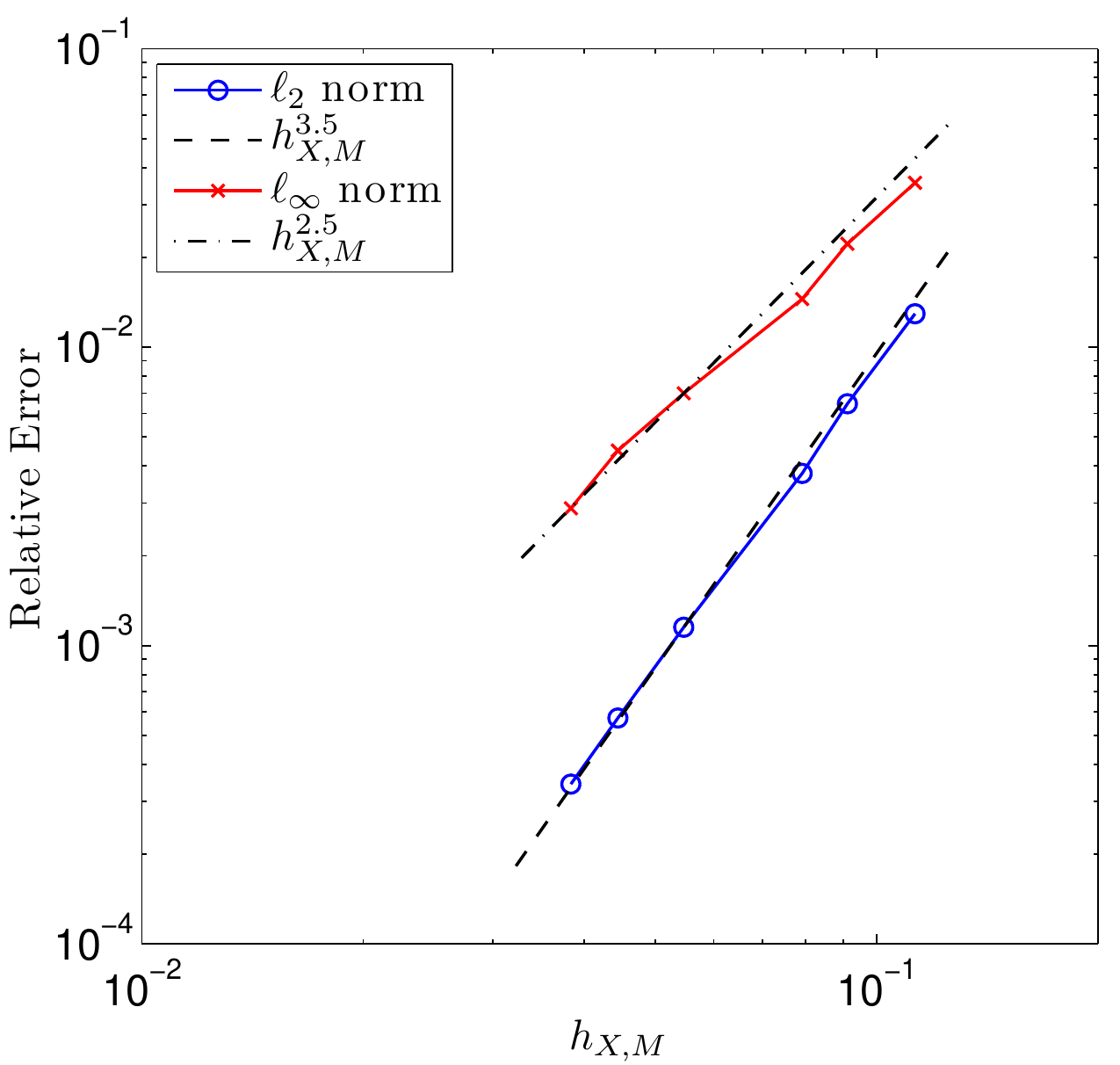} \label{fig:ErrorM2Beta5}} \\
\subfigure[$\M_1$, $\beta=3.5$]{ \includegraphics[width=0.45\textwidth]{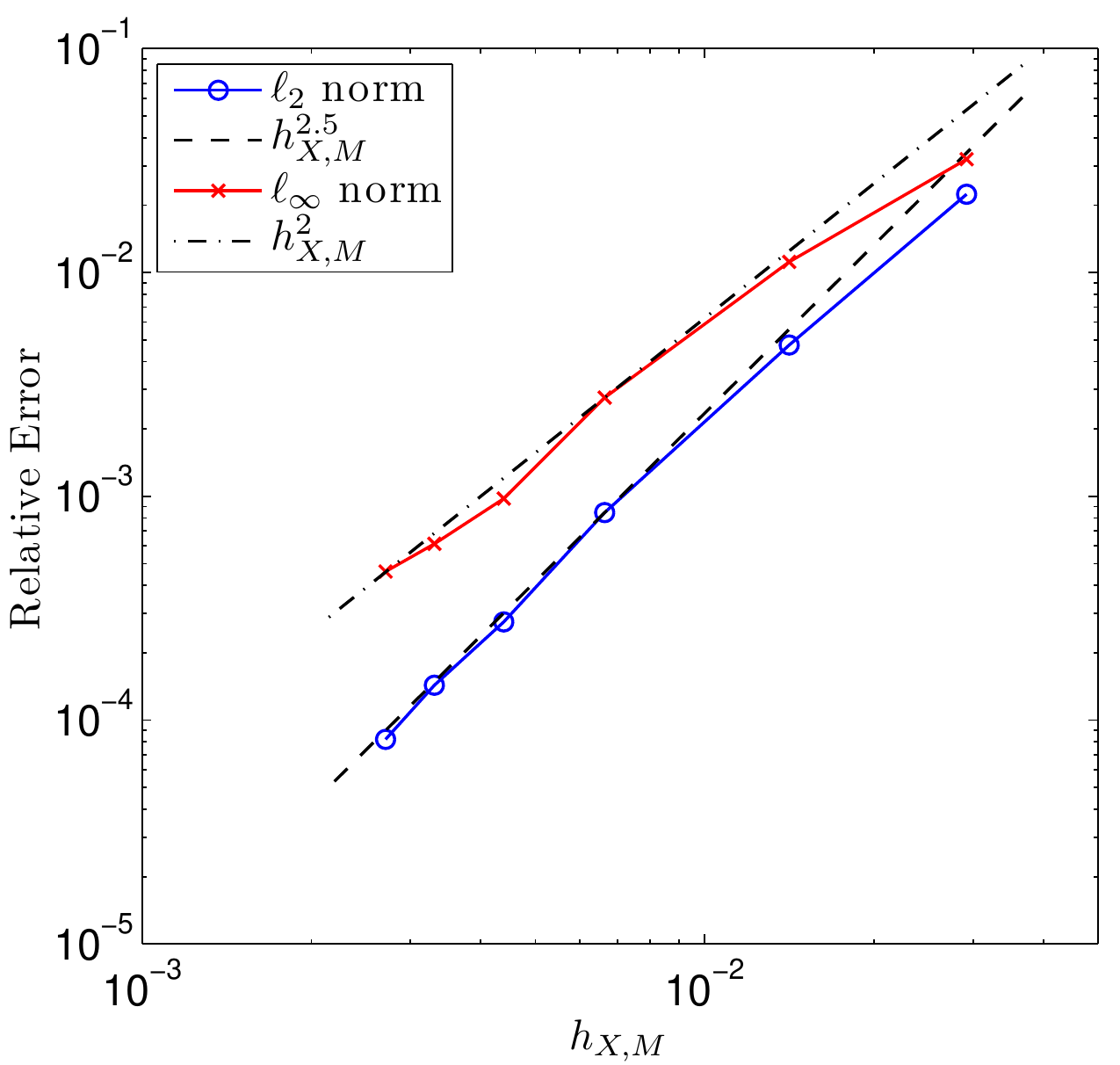} \label{fig:ErrorM1Beta3_5}} &
\subfigure[$\M_2$, $\beta=3.5$]{ \includegraphics[width=0.45\textwidth]{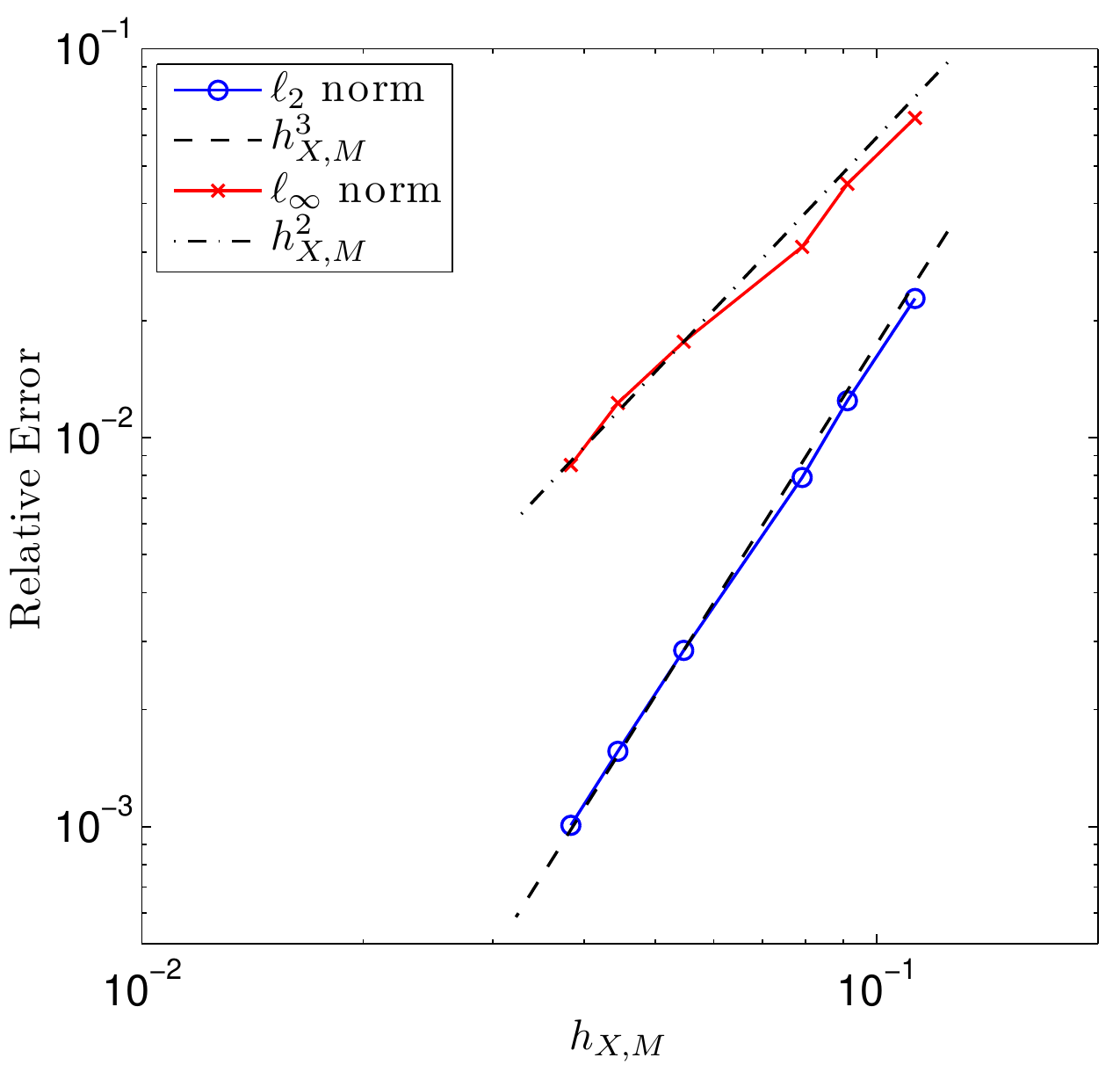} \label{fig:ErrorM2Beta3_5}}
\end{tabular}
\caption{Relative errors in reconstructing the different target functions $f_{\beta}$ in (\ref{eq:TargetFunction}) on the two different submanifolds with the kernel (\ref{eq:Wendland32}).  Plots in (a) and (c) are for $\M_1$ and plots in (b) and (d) are for $M_2$.  Lines marked with open circles in each figure correspond to the relative $\ell_2(\M)$ error and lines marked with x's are for the relative $\ell_{\infty}(\M)$ errors.  In (a) and (b) the dashed and dashed-dotted lines correspond to the predicted error estimates from Theorem \ref{theorem: native error estimate} for $\ell_2(\M)$ and $\ell_{\infty}(\M)$, respectively, while these lines in (c) and (d) correspond to the predicted estimates from Theorem \ref{theorem: escape error estimate}. \label{fig:Error}}
\end{figure}

For the first numerical experiment, we set set $\beta=4$ in (\ref{eq:TargetFunction}) for $\M_1$ and $\M_2$.  Since the native space for the kernel (\ref{eq:Wendland32}) is $H^{4}(\R^3)$, we expect the estimates from Theorem \ref{theorem: native error estimate} to apply for these target functions.  According to this theorem, the $\ell_2$ errors for $\M_1$ and $\M_2$ should decrease like $h_{X,\M_1}^3$ and $h_{X,\M_2}^{3.5}$, while the $\ell_{\infty}$ errors should decrease like $h_{X,\M_1}^{2.5}$ and $h_{X,\M_2}^{2.5}$.  Figures \ref{fig:Error}(a) and (b) display the computed relative errors versus the mesh norm for $f_{4}$ together with the predicted estimates.  The figures show good agreement between the numerical and theoretical results. 

In the second experiment, we set $\beta = 3.5$ in (\ref{eq:TargetFunction}) for $\M_1$ and $\M_2$, which makes the target functions rougher than the native space of the kernel and means the estimates from Theorem \ref{theorem: escape error estimate} will apply.  Assuming the mesh ratio is roughly constant (which is true for our experiments), this theorem predicts that the $\ell_2$ errors for $\M_1$ and $\M_2$ should decrease like $h_{X,\M_1}^{2.5}$ and $h_{X,\M_2}^{3}$, while the $\ell_{\infty}$ errors should decrease like $h_{X,\M_1}^{2}$ and $h_{X,\M_2}^{2}$.  Similar to the previous experiment, Figures \ref{fig:Error}(c) and (d) display the computed relative errors for these target functions  together with the predicted estimates.  Good agreement between the numerical and theoretical results is again displayed.

As a final numerical experiment, we verify the ``doubling'' estimates from  Corollary \ref{theorem: doubling trick}.  For the target function we use (\ref{eq:SmoothTargetFunction}) directly.  This function is $C^{\infty}(\R^3)$ and is thus much smoother than the native space of the kernel (\ref{eq:Wendland32}).  According to Corollary \ref{theorem: doubling trick}, the $\ell_2$ errors for $\M_1$ and $\M_2$ should decrease like $h_{X,\M_1}^6$ and $h_{X,\M_2}^{7}$ for this target function and kernel. Figure \ref{fig:SmoothError}(a) and (b) displays the results for $\M_1$ and $\M_2$, respectively.  We again find good agreement between the numerical and theoretical results.  The results from the figure also indicate that there does not appear to be a reduction in the estimates for the $\ell_{\infty}$ error for this case.

\begin{figure}[thb]
\centering
\begin{tabular}{cc}
\subfigure[$\M_1$]{\includegraphics[width=0.45\textwidth]{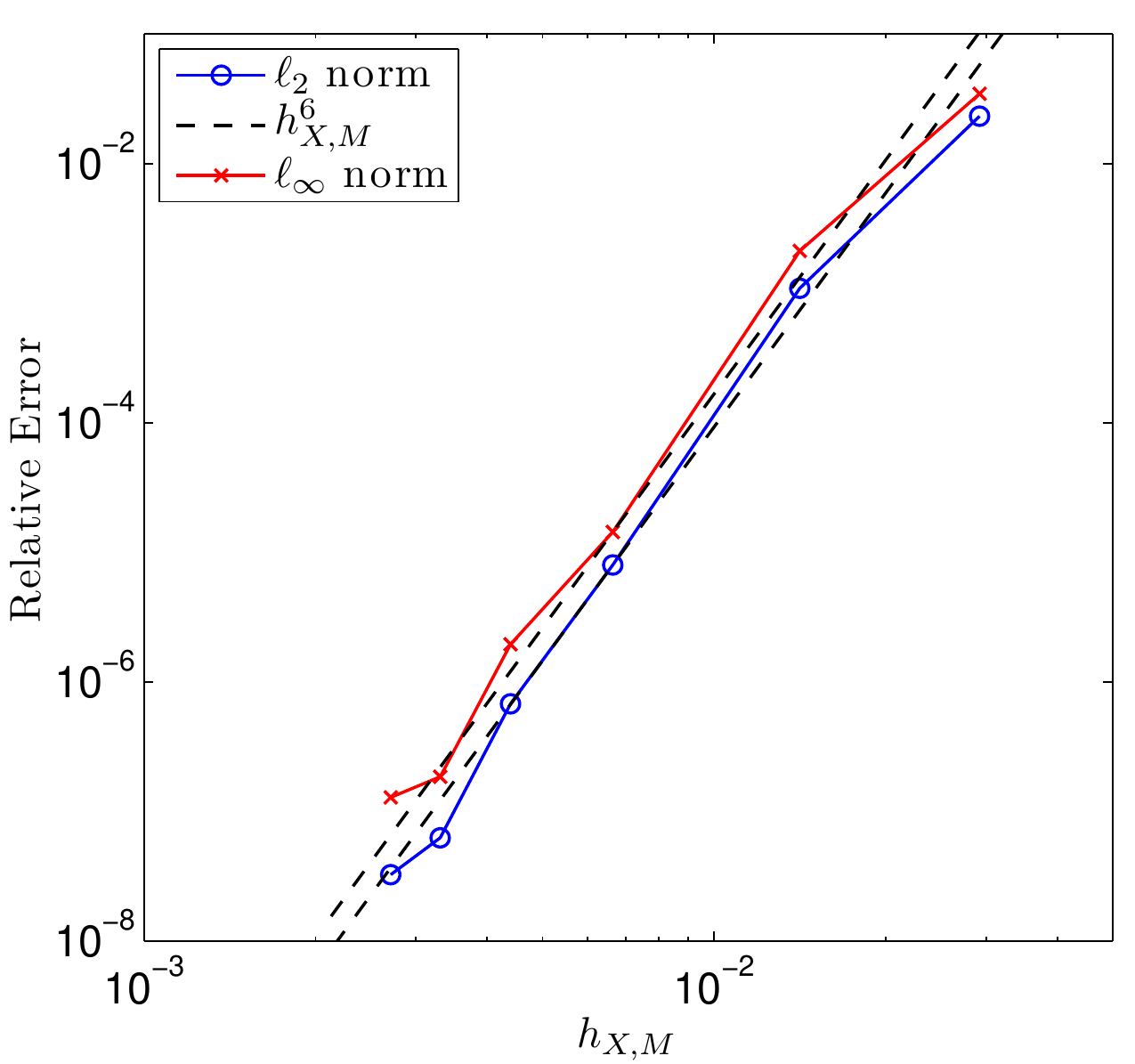} \label{fig:ErrorM1BetaInf}} &
\subfigure[$\M_2$]{\includegraphics[width=0.45\textwidth]{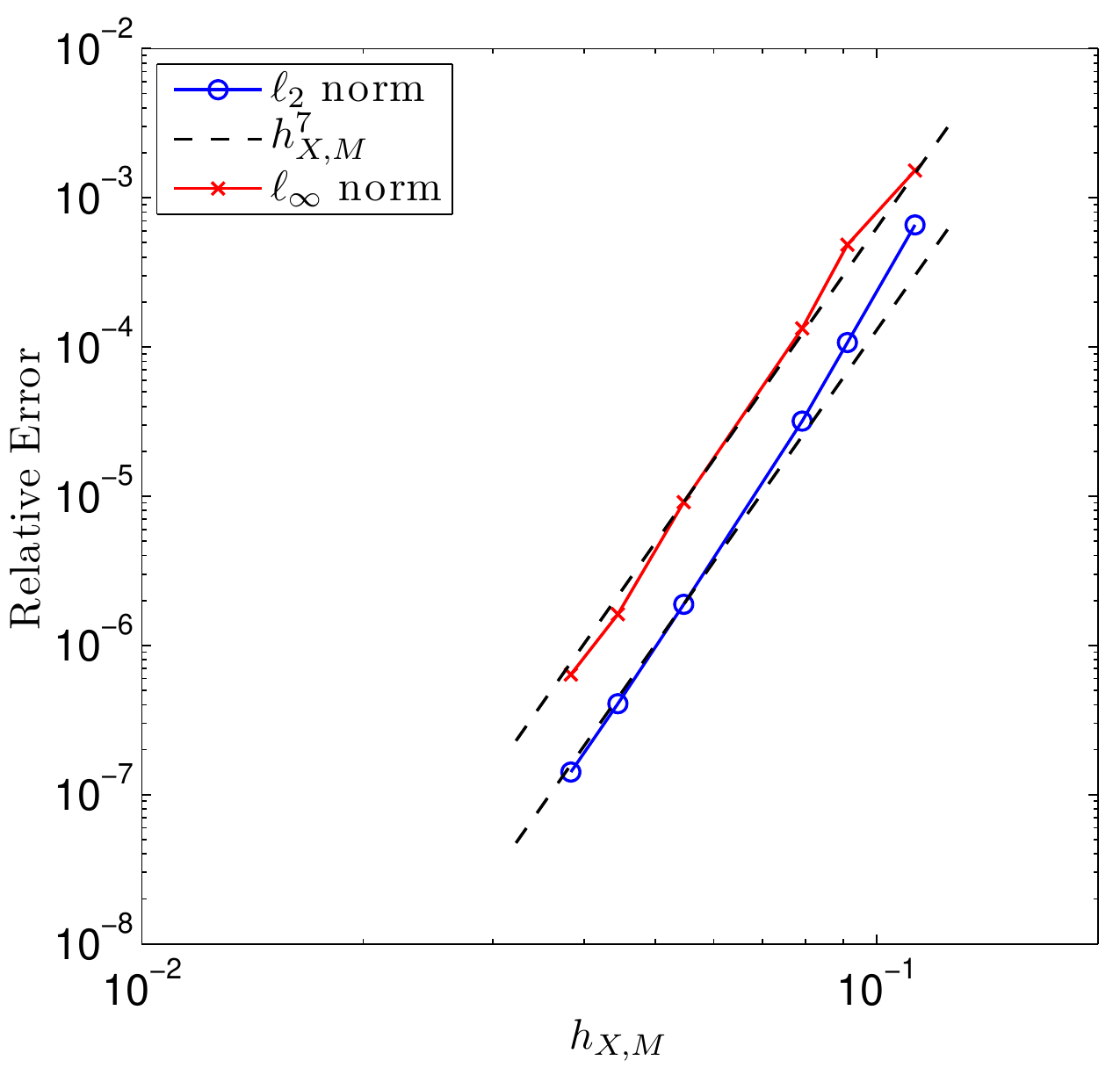} \label{fig:ErrorM2BetaInf}} \\
\end{tabular}
\caption{Relative errors in reconstructing the target function (\ref{eq:SmoothTargetFunction}) on (a) $\M_1$ and (b) $\M_2$ with the kernel (\ref{eq:Wendland32}).  The dashed lines in the figure correspond to the predicted error estimates for the $\ell_2(\M)$ error from Corollary \ref{theorem: doubling trick}. \label{fig:SmoothError}}
\end{figure}

\appendix{ \bf Acknowledgments.}
We wish to express our deep gratitude to Drs. Douglas Hardin and Edward Saff and Ms. Ayla Gafni, all from Vanderbilt University, for providing us with the near-minimum Reisz energy points for the torus used in the numerical experiments.

\bibliographystyle{abbrv}
\bibliography{refs}

\end{document}